\documentclass[12pt,a4paper,oneside,titlepage]{article}
\usepackage{verbatim} 
\usepackage{amsmath,amsfonts,amssymb,amsthm,amscd}
\usepackage{xcolor}
\theoremstyle{plain}
\newtheorem{Theorem}{Theorem}

\newtheorem{Lemma}[Theorem]{Lemma}
\newtheorem{Corollary}[Theorem]{Corollary}

\theoremstyle{definition}
\newtheorem{Definition}[Theorem]{Definition}

\theoremstyle{definition}

\newtheorem{Remark}{Remark}
\newtheorem{Open Problem}[Remark]{Open Problem}

\newtheorem{Example}{Example}
\newcommand{\bC}{\mathbb C}
\newcommand{\bN}{\mathbb N}
\newcommand{\bP}{\mathbb P}

\newcommand{\bQ}{\mathbb Q}
\newcommand{\bZ}{\mathbb Z}

\newcommand{\cP}{\mathcal{P}}

\vskip1cm

\title{Algebraic independence of infinite series
\footnotetext{AMS Class: 11J85, 11J72.}
\footnotetext{Key words and phrases: irrationality; infinite series; algebraic number; linear independence of numbers; algebraic independence of numbers.} }
\author{Jaroslav Han\v{c}l, Simon Kristensen, and Mathias L. Laursen}

\begin{document}
\maketitle
\date{}

\begin{abstract}
We give conditions on a finite set of series of rational numbers to ensure that they are algebraically independent. Specialising our results to polynomials of lower degree, we also obtain new results on irrationality and $\mathbb{Q}$-linear independence of such series.  
\end{abstract}

\section{Introduction}

The study of irrationality and transcendence of numbers date back to antiquity, where an unfortunate Pythagorean lost his life in proving the irrationality of $\sqrt{2}$. Much later, in the late nineteenth century, the existence of transcendental numbers was proven, explicit examples were provided, and some famous numbers such as $\pi$ and $e$ were shown to be transcendental. Later on, concepts such as linear independence over $\mathbb{Q}$, over $\overline{\mathbb{Q}}$ and algebraic independence of numbers have been important. In the first two cases, one thinks of the set of real numbers $\mathbb{R}$ as a vector space defined over the rationals, $\mathbb{Q}$, or the algebraic numbers, $\overline{\mathbb{Q}}$, and investigate whether a set of numbers is linearly independent over the base field as vectors. For algebraic independence, one investigates whether a set of $K$ real numbers has the property that no non-zero integer polynomial in $K$ variables vanishes at this set. In this case, the numbers are said to be algebraically independent. Of course, if one considers only polynomials of degree $1$, the notion of linear independence over $\mathbb{Q}$ is rediscovered. If furthermore $K = 1$, the question of irrationality is rediscovered.

The present paper is concerned with algebraic independence of a finite set of real numbers given in terms of series of rationals. More precisely, we will derive conditions on such a set of series which ensure that the numbers are algebraically independent. An important result in this respect is the 1975 result of Erd\H{o}s \cite{Erdos}, where it is shown that if $\{a_n\}_{n=1}^\infty$ is an increasing sequence of positive integers with $a_n > n^{1+\epsilon}$ for some $\epsilon > 0$ and 
\[
\limsup_{n \rightarrow \infty} a_n^{1/2^n} = \infty,
\]
then the number $\sum_{n=1}^\infty \frac{1}{a_n}$ is irrational. The result has been extended numerous times by various authors to encompass a wider range of series \cite{hanLinIndep} as well as series with more general denominators \cite{hns, ak, l}.

Our objective here is to obtain criteria ensuring algebraic independence of a set of such series. We will allow for numerators as well as denominators, although our series will remain defined over the rationals. One novelty of our approach is that we are able to remove the restriction of positivity in Erd\H{o}s' result at some expense. The conditions obtained will look somewhat cumbersome and technical. In order to demonstrate their applicability, we will provide a large number of examples of concrete series, which we show to be irrational, linearly independent over $\mathbb{Q}$ or even algebraically independent.

\section{Main results}
\newcommand{\aknbarinit}[2]{2^{#2^{-3} #1^{#2}}}
\newcommand{\aknbarafter}[2]{2^{#1N^{-2} (#2)^{N}}}

We will now state our main theorem. We will be considering series of the form
\[
\alpha_k = \sum_{n=1}^\infty \frac{b_{k,n}}{a_{k,n}},
\]
where for $k=1,2,\dots, K$, $\{a_{k,n}\}_{n=1}^\infty$ and $\{b_{k,n}\}_{n=1}^\infty$ are sequences of non-zero integers. Our results depend heavily on the joint divisibility properties
of the denominators. In our main result, we express the required properties in terms of the $p$-adic valuation. As usual, for a prime $p$ and an integer $n$, we will let $\nu_p(n)$ denote the $p$-adic valuation of $n$, i.e., $\nu_p(0)=\infty$ and, for $n\ne 0$, $\nu_p(n)$ is the unique non-negative integer such that $p^{\nu_p(n)} \vert n$ but $p^{\nu_p(n) + 1} \nmid n$.
With this in mind, we now state our main result on the algebraic independence of a finite set of series of the above form.
In some of the theorems, we give conditions to ensure that non-zero polynomials $P\in\bZ[x_1,\ldots, x_K]$ of a degree bounded by a fixed integer $d$ will all satisfy $P(\alpha_1,\ldots,\alpha_K)\ne 0$.
We here take the degree of a polynomial to be the maximum degree of its monomial terms, and we define $\deg(x_1^{i_1}\cdots x_K^{i_K}) = i_1+\cdots+i_K$.

When only two series are considered, so that $K = 2$, we are able to obtain much weaker criteria at the cost of arriving at a somewhat weaker conclusion. To fix ideas, we define a weaker notion here. For a polynomial $P(x_1, x_2)$, let $\tilde{P}(x_1, x_2, x_3)$ be the projective version of $P$, i.e., the homogeneous polynomial obtained from $P$ by multiplying each monomial with $x_3$ sufficiently many times that the resulting polynomial is homogeneous of the same degree as the original one. This polynomial defines a plane, projective curve, $\mathcal{C}_P \in \mathbb{P}^2$. If this curve is smooth, its (geometric) genus $g(\mathcal{C}_P)$  is given by the genus--degree formula,
\[
g(\mathcal{C}_P) = \frac{(\deg P-1)(\deg P-2)}{2}.
\]
If the curve is singular, the genus decreases. An ordinary singularity of multiplicity $r$ decreases the genus by $r(r-1)/2$. Non-ordinary singularities need to be examined individually. We will say that two numbers $\alpha_1$ and $\alpha_2$ are \emph{non-degenerately independent of order $d$} if no integer polynomial $P$ in two variables of degree at most $d$ such that $\deg P\le 3$ or $g(\mathcal{C}_P) \ge 2$ vanishes at the point $(\alpha_1, \alpha_2)$. If two numbers $\alpha_1$ and $\alpha_2$ are non-degenerately independent of order $d$ for any $d$, we will say that  $\alpha_1$ and $\alpha_2$ are \emph{non-degenerately algebraically independent}.

 For polynomials $P$ such that the resulting plane, projective curve is smooth, the genus condition is satisfied as soon as $\deg P > 3$. For higher degrees, we are removing certain potential algebraic dependencies with this definition, arising from the decrease in genus from the singularities. One should stress that most polynomials give rise to a smooth curve, and that if the degree is large, most non-smooth curves will have genus $\ge 2$. In work in progress \cite{kp}, F. Pazuki and the second named author obtain a counting estimate for polynomials of fixed degree $d$ resulting in a curve of fixed genus $g$.

\begin{Theorem}
	\label{thm:general}
	Let $K$ and $d$ be positive integers, let $\varepsilon$ and $\kappa$ be positive real numbers with $\kappa <1$, and let $p$ be a prime number.
	For $k=1,\ldots, K$, let $\{a_{k,n}\}_{n=1}^\infty$ and $\{b_{k,n}\}_{n=1}^\infty$ be sequences of non-zero integers with $p\nmid \gcd(a_{k,n},b_{k,n})$ such that for each sufficiently large $N\in\bN$,
	\begin{equation}\label{hkl1}
		\nu_p(a_{k,n}) = \max_{1\le m\le N} \nu_p(a_{k,m})
	\end{equation}
	for exactly one $n\le N$ and
	\begin{equation}
		\label{hkl2}
		\lim_{N\to\infty} \Big(\max_{1\le n\le N}\nu_p(a_{k,n}) - d \max_{1\le n\le N}\nu_p(a_{k-1,n})\Big)=\infty,
	\end{equation}
	writing $a_{0,n}=1$ for all $n$.
	Suppose there is a sequence $\{a_n\}_{n=1}^\infty$ of integers such that for every $k=1,\dots ,K$ and each sufficiently large $n\in\bN$,
	\begin{equation}
		\label{hkl4}
		n^{1+\varepsilon}\leq a_n\le a_{n+1},
	\end{equation}  
	\begin{equation}
		\label{hkl5}
		a_{n} 2^{-(\log_2 a_{n})^\kappa} \le |a_{k,n}| \leq \max \big\{a_{n} 2^{(\log_2 a_{n})^\kappa}, 2^{n^{-3}(Kd+1)^n}\big\},
	\end{equation}  
	\begin{equation}
		\label{hkl6}
		|b_{k,n}| \leq 2^{(\log_2 a_{n})^\kappa},
	\end{equation}  
	and
	\begin{equation}
		\label{hkl7}
		\limsup_{n\rightarrow\infty} a_{n}^{\frac{1}{(Kd+1)^n}} =
		\infty .
	\end{equation}
	For $k=1,\dots ,K$, set	$\alpha_k= \sum_{n=1}^\infty \frac {b_{k,n}}{a_{k,n}}$. Then $(\alpha_1,\dots ,\alpha_K)$ is not the root of any non-zero polynomial of $K$ variables with integer coefficients and degree less than or equal to $d$.

	If $K=2$, this is also true when $d$ is replaced by 3 in equation \eqref{hkl2} and the non-vanishing conclusion is replaced by non-degenerately independent of order $d$.
\end{Theorem}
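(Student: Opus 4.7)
The plan is to argue by contradiction. Suppose $P\in\bZ[x_1,\ldots,x_K]$ is a nonzero polynomial with $\deg P\le d$ and $P(\alpha_1,\ldots,\alpha_K)=0$. Set $\alpha_{k,N}:=\sum_{n=1}^N b_{k,n}/a_{k,n}$, $R_N:=P(\alpha_{1,N},\ldots,\alpha_{K,N})$, $M_{k,N}:=\max_{n\le N}\nu_p(a_{k,n})$ and $D_{k,N}:=\prod_{n=1}^N a_{k,n}$. I shall show that, along the subsequence provided by \eqref{hkl7}, the rational $R_N$ admits incompatible bounds from above (via $P(\vec\alpha)=0$ and smallness of the tails) and from below (via $p$-adic non-vanishing and the trivial denominator bound).

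For the lower bound, I first use \eqref{hkl1} together with $p\nmid\gcd(a_{k,n},b_{k,n})$ to obtain $\nu_p(\alpha_{k,N})=-M_{k,N}$ exactly: writing $\alpha_{k,N}$ over a common denominator of $p$-adic valuation $M_{k,N}$, the unique summand achieving $\nu_p(a_{k,n})=M_{k,N}$ contributes a numerator of valuation $0$ (since there $p\nmid b_{k,n}$), while every other summand contributes valuation $\ge 1$. Hence $\nu_p(c_{\mathbf{i}}\prod_k\alpha_{k,N}^{i_k})=\nu_p(c_{\mathbf{i}})-\sum_k i_k M_{k,N}$ for every monomial of $P$. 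The gap condition \eqref{hkl2} forces $M_{k,N}>dM_{k-1,N}$ for all $k$ once $N$ is large; combined with the constraint $\sum_k i_k\le d$, a short base-$(d+1)$ argument shows that $\mathbf{i}\mapsto\sum_k i_k M_{k,N}$ is injective on the support of $P$, so is uniquely maximised at the right-to-left lexicographically largest multi-index $\mathbf{i}^\ast$. Ultrametricity then gives $\nu_p(R_N)=\nu_p(c_{\mathbf{i}^\ast})-\sum_k i^\ast_k M_{k,N}$, which is finite; in particular $R_N\ne 0$, and since $R_N\prod_k D_{k,N}^d\in\bZ$ this yields $|R_N|\ge 1/\prod_k D_{k,N}^d$.

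For the upper bound I use $P(\vec\alpha)=0$ together with the mean value theorem: $|R_N|\le C_P\max_k|\alpha_k-\alpha_{k,N}|$, where $C_P$ depends on $P$ and on a uniform bound for the $|\alpha_k|$ (which exists by \eqref{hkl4}--\eqref{hkl6}). The tail is dominated by its leading term, bounded by $2^{2(\log_2 a_{N+1})^\kappa}/a_{N+1}$ via \eqref{hkl5}--\eqref{hkl6}. To close, select $N+1$ along the subsequence from \eqref{hkl7} so that $\log a_{N+1}/(Kd+1)^{N+1}\to\infty$. Bounding $\log|a_{k,n}|\le\log a_n+(\log_2 a_n)^\kappa\log 2+n^{-3}(Kd+1)^n\log 2$ by \eqref{hkl5} and summing over $n\le N$ gives $\log\prod_k D_{k,N}^d\le Kd\sum_{n\le N}\log a_n+O((Kd+1)^N)$. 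Picking the subsequence sparsely enough within the limsup \eqref{hkl7}, one arranges $\sum_{n\le N}\log a_n=o(\log a_{N+1})$, while the $O((Kd+1)^N)$ term is swamped by $\log a_{N+1}\gg (Kd+1)^{N+1}$. The resulting upper bound for $|R_N|$ is then strictly smaller than $1/\prod_k D_{k,N}^d$, the required contradiction.

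For the $K=2$ addendum, polynomials $P$ of degree at most $3$ are handled by exactly the same argument with $d$ replaced by $3$ in the uniqueness step; the weakened gap $M_{2,N}-3M_{1,N}\to\infty$ is precisely what the base-$4$ argument needs for multi-indices with $\sum i_k\le 3$. For polynomials of degree $>3$ whose projective curve $\mathcal{C}_P$ has geometric genus $\ge 2$, I would appeal to Faltings's theorem, which forbids infinitely many $\bQ$-rational points on $\mathcal{C}_P$, and convert the rational approximations $(\alpha_{1,N},\alpha_{2,N})$ together with the smallness of $R_N$ into infinitely many distinct rational points on $\mathcal{C}_P$ of unbounded height via a Diophantine approximation argument on the curve, contradicting Faltings. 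I expect the main technical obstacle to lie in the bookkeeping of the third paragraph, where the calibration of $(Kd+1)^n$ in \eqref{hkl5} against the limsup in \eqref{hkl7} leaves essentially no slack and forces a careful sparse choice of subsequence; for the $K=2$ case, the Faltings step also requires care to produce genuinely new rational points of unbounded height rather than the rational approximations already at hand.
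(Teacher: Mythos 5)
Your overall architecture matches the paper's: a $p$-adic valuation argument showing the colexicographically leading monomial of $P(\alpha_{1,N},\ldots,\alpha_{K,N})$ has strictly minimal valuation (hence the partial-sum values are non-zero), a trivial denominator lower bound, a Lipschitz upper bound from the tails, and Faltings for the $K=2$, high-genus case. The $p$-adic paragraph is essentially the paper's Lemma on the algebraic part and is sound. However, there are two genuine gaps.

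First, the analytic bookkeeping in your third paragraph does not work as stated. From \eqref{hkl7} you cannot arrange $\sum_{n\le N}\log a_n=o(\log a_{N+1})$; the limsup condition only yields (after a pigeonhole-type lemma on unbounded sequences) infinitely many $N$ with
\[
a_N>\Big(1+\tfrac{1}{N^2}\Big)^{(Kd+1)^N}\prod_{n<N}a_n^{Kd},
\]
i.e.\ the product is beaten by the factor $Kd$ with only a margin of roughly $2^{N^{-3}(Kd+1)^N}$ to spare, not by an arbitrary power. Likewise, the tail $\sum_{n>N}2^{2(\log_2 a_n)^\kappa}/a_n$ is \emph{not} dominated by its leading term in general (e.g.\ during long stretches of polynomial growth $a_n\asymp n^{1+\varepsilon}$ the tail is $\asymp N^{-\varepsilon}$ while the first term is $\asymp N^{-(1+\varepsilon)}$). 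Reconciling the thin margin with the possibly slowly decaying tail, and with the exceptional upper branch $2^{n^{-3}(Kd+1)^n}$ in \eqref{hkl5}, is precisely the content of the paper's main analytic lemma (its Lemma 11), which requires a case analysis according to whether $a_n^{(Kd+1+\delta)^{-n}}$ is unbounded and whether $a_n<2^n$ infinitely often. As written, your "pick the subsequence sparsely enough" step is an assertion of the hardest part of the proof, and the specific form you assert is false.

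Second, your use of Faltings is misdirected. One cannot "convert rational approximations together with smallness of $R_N$ into rational points on $\mathcal{C}_P$": points near a curve need not lie on it, and no Diophantine approximation mechanism on a genus $\ge2$ curve will manufacture them. The correct (and simpler) application, which is what the paper does, is to note that the points $(\alpha_{1,N},\alpha_{2,N})$ are rational and pairwise distinct for large $N$ (their reduced denominators grow, by \eqref{hkl1} and \eqref{hkl2}), so if $P(\alpha_{1,N},\alpha_{2,N})=0$ for infinitely many $N$ one gets infinitely many rational points exactly on $\mathcal{C}_P$ (or on its normalisation), contradicting Faltings. This gives $R_N\ne0$ for all large $N$, which is exactly the input your degree-$>3$ case is missing, and then the same upper/lower bound comparison finishes the argument.
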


It is worth noting that in the last case of the theorem, if one replaces $3$ with any higher number $d'$, say, \eqref{hkl2} automatically rules out the existence of polynomials of degree $\le d'$ vanishing at the point $(\alpha_1, \alpha_2)$.  This cuts down on the number of degenerate polynomials potentially obstructing full independence.

The conditions of Theorem \ref{thm:general} may seem unwieldy to check, and their origin may not be terribly clear. Let us take a moment to digest them. Condition \eqref{hkl4}, \eqref{hkl5}, \eqref{hkl6} and \eqref{hkl7} are reminiscent of the conditions in Erd\H{o}s' paper \cite{Erdos}, except that no numerators were present in that paper, and it had $K=1$ and $a_n=a_{1,n}$. 
Nonetheless, the role played by these conditions is similar and will be used to show that the theorem follows from a result about the partial sums $\sum_{n=N}^{\infty} b_{k,n}/a_{k,n}$ when $N$ is large.
Assumptions \eqref{hkl5} and \eqref{hkl6} arise from considering several numbers and from having a non-fixed denominators, respectively, and essentially allow us to replace the sequences $\{a_{k,n}\}_{n=1}^\infty$ and $\{b_{k,n}\}_{n=1}^\infty$ with the single sequence $\{a_{n}\}_{n=1}^\infty$ for a central part of the proof.

The novelty of our result lies in the origins of assumptions \eqref{hkl1} and \eqref{hkl2}. As we shall see, their role is to ensure that no integer polynomial in $K$ variables will vanish at infinitely many of the partial sums of the numbers $\alpha_k$.

From the conditions in Theorem \ref{thm:general}, we could easily formulate conditions under which \eqref{hkl2} and \eqref{hkl7} hold for every value of $d \in \mathbb{N}$. This immediately leads us to a criterion for algebraic independence of the $\alpha_k$ in the following way.

\begin{Theorem}
	\label{thm:alg.indep.}
	Let $K$ be a positive integer, let $p$ be a prime number, and let $\varepsilon$ and $\kappa$ be positive real numbers with $\kappa <1$.
	For $k=1,\dots ,K$, let $\{a_{k,n}\}_{n=1}^\infty$ and $\{b_{k,n}\}_{n=1}^\infty$ be sequences of non-zero integers with $p\nmid \gcd(a_{k,n},b_{k,n})$ so that assumption \eqref{hkl1} is satisfied for exactly one $n\le N$ when $N$ is sufficiently large, while
	\begin{equation}\label{eq:np_akn_ration to infty}
		\lim_{N\to\infty} \frac {\max_{n=1,\dots ,N}  \nu_p(a_{k,n})} {1+ \max_{n=1,\dots ,N} \nu_p(a_{k-1,n})}=\infty,
	\end{equation} 
	where $a_{0,n} =1$ for each $n\in\bN$.
	Suppose there is a non-decreasing sequence $\{a_n\}_{n=1}^\infty$ of positive integers that, for each $k=1,\dots ,K$ and all sufficiently large $n,A\in\bN$, satisfies assumption \eqref{hkl4}, \eqref{hkl6},
	\begin{equation}\label{hkl5'}
		a_{n} 2^{-(\log_2 a_{n})^\kappa} \le |a_{k,n}| \leq \max \big\{a_{n} 2^{(\log_2 a_{n})^\kappa}, 2^{A^n}\big\},
	\end{equation}
	and
	\begin{equation}\label{eq:limsup a_n A-n}
		\limsup_{n\rightarrow\infty} a_{n}^{\frac{1}{A^n}} =
		\infty.
	\end{equation}
	For $k=1,\dots ,K$, set $\alpha_k= \sum_{n=1}^\infty \frac {b_{k,n}}{a_{k,n}}$. 
	Then $\alpha_1,\dots ,\alpha_K$ are algebraically independent over $\mathbb Q$. 
\end{Theorem}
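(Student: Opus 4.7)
The plan is to deduce Theorem \ref{thm:alg.indep.} from Theorem \ref{thm:general} by a straightforward reduction: for each positive integer $d$, I would apply Theorem \ref{thm:general} with that $d$, the same $K$, $p$, $\varepsilon$, $\kappa$, and the same sequences $\{a_{k,n}\}$ and $\{b_{k,n}\}$. Since $(\alpha_1,\dots,\alpha_K)$ is algebraically independent over $\bQ$ precisely when no nonzero integer polynomial of \emph{any} degree vanishes there, it suffices to rule out such polynomials of each fixed degree at most $d$ and then let $d\to\infty$.

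The bulk of the work is in verifying the hypotheses \eqref{hkl1}, \eqref{hkl2}, \eqref{hkl4}, \eqref{hkl5}, \eqref{hkl6}, \eqref{hkl7} of Theorem \ref{thm:general} for that fixed $d$. Conditions \eqref{hkl1}, \eqref{hkl4}, and \eqref{hkl6} are literally assumed in the statement of Theorem \ref{thm:alg.indep.}, so nothing is required. For \eqref{hkl2}, I would observe that \eqref{eq:np_akn_ration to infty} is strictly stronger: the ratio tending to infinity forces $\max_{n\le N}\nu_p(a_{k,n})\ge(d+1)(1+\max_{n\le N}\nu_p(a_{k-1,n}))$ for all large $N$, and rearranging immediately yields the required divergence of $\max_{n\le N}\nu_p(a_{k,n})-d\max_{n\le N}\nu_p(a_{k-1,n})$.

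For the remaining two hypotheses, the key is to choose the auxiliary parameter $A$ in Theorem \ref{thm:alg.indep.} compatibly with the factor $Kd+1$ appearing in Theorem \ref{thm:general}. I would pick $A$ to be any integer at least as large as $\max(A_0,Kd+1)$, where $A_0$ is the threshold beyond which the hypotheses of Theorem \ref{thm:alg.indep.} hold. With $A\ge Kd+1$, the upper bound $\max\{a_n 2^{(\log_2 a_n)^\kappa},2^{A^n}\}$ from \eqref{hkl5'} dominates the corresponding bound $\max\{a_n 2^{(\log_2 a_n)^\kappa},2^{n^{-3}(Kd+1)^n}\}$ from \eqref{hkl5}, since $A^n\ge(Kd+1)^n\ge n^{-3}(Kd+1)^n$ for all $n\ge1$. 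Likewise, $1/(Kd+1)^n\ge 1/A^n$ combined with $a_n\ge1$ (from \eqref{hkl4}) yields $\limsup_n a_n^{1/(Kd+1)^n}\ge\limsup_n a_n^{1/A^n}=\infty$, so \eqref{hkl7} follows from \eqref{eq:limsup a_n A-n}.

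There is no substantive obstacle here beyond careful bookkeeping; the theorem is essentially a packaging of Theorem \ref{thm:general} in which the bounded-degree parameter $d$ has been absorbed into the strengthened assumptions \eqref{eq:np_akn_ration to infty} and \eqref{eq:limsup a_n A-n}. Once the hypotheses of Theorem \ref{thm:general} are verified for an arbitrary $d$, its conclusion rules out the vanishing of integer polynomials of degree at most $d$ at $(\alpha_1,\dots,\alpha_K)$, and letting $d$ range over all positive integers establishes the algebraic independence over $\bQ$.
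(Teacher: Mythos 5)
Your overall strategy is the paper's own: Theorem \ref{thm:alg.indep.} is presented there as an immediate corollary of Theorem \ref{thm:general}, obtained by checking its hypotheses for each degree $d$ and letting $d$ range over all positive integers. Your verifications of \eqref{hkl1}, \eqref{hkl4}, \eqref{hkl6} and \eqref{hkl7} are fine, and \eqref{hkl2} is essentially right (it is cleaner to note that for every $C\ge d$ one eventually has $\max_{n\le N}\nu_p(a_{k,n})\ge C\bigl(1+\max_{n\le N}\nu_p(a_{k-1,n})\bigr)$, whence the difference in \eqref{hkl2} is eventually $\ge C$; your fixed constant $d+1$ does not by itself give divergence when $k=1$, though that case follows directly from \eqref{eq:np_akn_ration to infty}).

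However, your deduction of \eqref{hkl5} from \eqref{hkl5'} has the inequality pointing the wrong way. You choose $A\ge Kd+1$ and observe that the upper bound in \eqref{hkl5'} then \emph{dominates} the one in \eqref{hkl5}. But knowing $|a_{k,n}|$ lies below the larger bound $\max\{a_n2^{(\log_2 a_n)^\kappa},2^{A^n}\}$ does not place it below the smaller bound $\max\{a_n2^{(\log_2 a_n)^\kappa},2^{n^{-3}(Kd+1)^n}\}$; for the implication you need the hypothesis' bound to be \emph{at most} the target bound. Since \eqref{hkl5'} is only assumed for $A$ at least some threshold $A_0$, the operative instance is $A=A_0$, and $2^{A_0^n}\le 2^{n^{-3}(Kd+1)^n}$ for large $n$ holds only when $A_0<Kd+1$ — so taking $A$ large, as you do, is exactly the wrong choice. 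The repair is straightforward: given a non-zero polynomial of degree $e$, apply Theorem \ref{thm:general} with any $d\ge e$ so large that $Kd+1>A_0$ (permissible, since the conclusion for $d$ subsumes that for $e$); then $A_0^n\le n^{-3}(Kd+1)^n$ eventually, so \eqref{hkl5} holds, while \eqref{hkl7} follows as you say from \eqref{eq:limsup a_n A-n} with $a_n\ge1$. With that correction the argument is complete and coincides with the paper's intended one.
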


As stated in the results, when $K=2$, most of the conditions needed for our two previous results become simpler at the cost of removing potential degenerate cases of algebraic dependence. Indeed, it is straightforward to check in this case that the following holds.

\begin{Theorem}
	\label{thm:alg.indep. K=2}
	Let $\{a_{k,n}\}_{n=1}^\infty$, $\{b_{k,n}\}_{n=1}^\infty$, $\{a_{n}\}_{n=1}^\infty$, and $\alpha_k$ be given as in Theorem \ref{thm:general} with $K=2$, $d=3$,
	and the additional assumption that for every positive integer $A$,
	\begin{equation*}
		\limsup_{n\rightarrow\infty} a_{n}^{\frac{1}{A^n}} =
		\infty.
	\end{equation*}
	Then $\alpha_1$ and $\alpha_2$ are non-degenerately algebraically independent over $\mathbb Q$. 
\end{Theorem}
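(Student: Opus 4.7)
The plan is to obtain this statement as a direct application of Theorem~\ref{thm:general}, invoking its $K=2$ clause once for each positive integer $d\geq 3$. Recall that by definition, $\alpha_1$ and $\alpha_2$ are non-degenerately algebraically independent exactly when they are non-degenerately independent of order $d$ for every $d\in\bN$, so the task reduces to obtaining the latter for each $d$ in turn.

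Fix $d\geq 3$ and aim to apply the $K=2$ part of Theorem~\ref{thm:general} with this degree. Assumptions \eqref{hkl1}, \eqref{hkl4}, and \eqref{hkl6} do not involve $d$ and hold by hypothesis; moreover, \eqref{hkl2} is required in the $K=2$ case only with $d$ replaced by $3$, which is precisely what the standing hypotheses provide. The two remaining conditions do depend on $d$ but degrade gracefully: the upper bound in \eqref{hkl5} is $\max\{a_n 2^{(\log_2 a_n)^\kappa},2^{n^{-3}(2d+1)^n}\}$, which is monotonically weaker as $d$ grows, so the $d=3$ instance of \eqref{hkl5} that is assumed implies the inequality for arbitrary $d\geq 3$; and condition \eqref{hkl7} for this $d$ is exactly $\limsup_{n\to\infty} a_n^{1/(2d+1)^n}=\infty$, which is the additional limsup assumption specialised to $A=2d+1$. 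Therefore Theorem~\ref{thm:general} applies and delivers non-degenerate independence of order $d$.

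Since the above argument works for every $d\geq 3$, and since non-degenerate independence of order $d$ trivially implies the corresponding statement for every smaller order, the pair $(\alpha_1,\alpha_2)$ is non-degenerately independent of order $d$ for every $d\in\bN$, which is the desired conclusion. The entire proof is essentially bookkeeping: the real analytic and arithmetic content sits inside Theorem~\ref{thm:general}, so there is no substantial obstacle here beyond verifying that the quantitative conditions scale correctly in $d$ and noticing that the strengthened limsup hypothesis is exactly what feeds \eqref{hkl7} for every possible choice of $d$.
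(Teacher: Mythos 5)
Your proof is correct and is exactly the ``straightforward check'' that the paper alludes to without writing out: fix $d$, observe that \eqref{hkl2} with $d$ replaced by $3$ is the standing hypothesis, that \eqref{hkl5} only weakens and \eqref{hkl7} becomes the extra limsup assumption with $A=2d+1$, and invoke the $K=2$ clause of Theorem~\ref{thm:general} for each $d$.
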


If we focus on $K=1$, we also get new irrationality and transcendence criteria, which we will phrase in terms of the following definitions of $p$-irrationality and $p$-transcendence.
\begin{Definition}
	Let $p$ be a prime number. 
	We say that a sequence of non-zero numbers $\{a_n\}_{n=1}^\infty$ is $p$-irrational or $p$-transcendental, if for every sequence $\{c_n\}_{n=1}^\infty$ of integers $c_n$ with $p\nmid c_n$, the infinite series
	\begin{align*}
		\sum_{n=1}^{\infty}\frac{1}{a_n c_n}
	\end{align*}
	converges to a number that is irrational or transcendental, respectively.
\end{Definition}

This generalises the notions of irrational and transcendental sequences introduced by Erd\H{o}s \cite{Erdos} and Han\v{c}l \cite{hanTransc}, respectively, where it is assumed that $c_n>0$ rather than $p \nmid c_n$.
From Theorem \ref{thm:general}, we obtain the below result.
\begin{Theorem}\label{thm:p-irrational}
	Let $p$ be a prime number, let $C$ be a positive integer, and let $\varepsilon$ and $\kappa$ be positive real numbers with $\kappa<1$.
	Let $\{a_n\}_{n=1}^\infty$ and $\{b_n\}_{n=1}^\infty$ be sequences of non-zero integers with $p\nmid \gcd(a_{n},b_{n})$ such that $\limsup_{n\to\infty} \nu_p(a_n)=\infty$ and either $\nu_p(a_m)\ne\nu_p(a_n)$ or $\nu_p(a_m)=\nu_p(a_n)\le C$ when $m\ne n$.
	Suppose that $a_{n+1}\ge a_n\ge n^{1+\varepsilon}$ and $|b_n|\le 2^{(\log_2 |a_n|)^\kappa}$ when $n$ is sufficiently large.
	Then the sequence $\{a_n/b_n\}$ is $p$-irrational if
	\begin{equation}\label{eq:limsup:irr}
		\limsup_{n \rightarrow \infty} |a_n|^{\frac{1}{2^n}} = \infty,
	\end{equation}
	and it is $p$-transcendental if equation \eqref{eq:limsup a_n A-n} is satisfied for every positive integer $A$.
\end{Theorem}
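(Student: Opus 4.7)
The plan is to reduce Theorem \ref{thm:p-irrational} to the case $K = 1$ of Theorem \ref{thm:general} (with $d = 1$) for the irrationality statement, and to the case $K = 1$ of Theorem \ref{thm:alg.indep.} for the transcendence statement. Unwinding the definitions of $p$-irrationality and $p$-transcendence, I need to show that for every sequence $\{c_n\}_{n=1}^\infty$ of non-zero integers with $p \nmid c_n$, the series
\[
\alpha = \sum_{n=1}^\infty \frac{1}{(a_n/b_n) c_n} = \sum_{n=1}^\infty \frac{b_n}{a_n c_n}
\]
is irrational (respectively transcendental); absolute convergence is immediate from $|b_n/(a_n c_n)| \le 2^{(\log_2 |a_n|)^\kappa}/|a_n|$ together with $|a_n| \ge n^{1+\varepsilon}$ and $\kappa<1$. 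I would set $a_{1,n} := a_n c_n$ and $b_{1,n} := b_n$, so that $\alpha_1 = \alpha$, and verify the hypotheses of the appropriate theorem above.

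The $p$-adic hypotheses \eqref{hkl1}, \eqref{hkl2} and \eqref{eq:np_akn_ration to infty} are the easy part: since $p \nmid c_n$, $\nu_p(a_{1,n}) = \nu_p(a_n)$, so $p \nmid \gcd(a_{1,n}, b_{1,n})$ follows from $p \nmid \gcd(a_n, b_n)$. The structural assumption that repeated values of $\nu_p(a_n)$ are bounded by $C$, combined with $\limsup_n \nu_p(a_n) = \infty$, ensures that once $N$ is large enough that $\max_{m \le N} \nu_p(a_m) > C$, this maximum is attained at a single index, giving \eqref{hkl1}. The same $\limsup$ condition delivers \eqref{hkl2} for every fixed $d$ and also \eqref{eq:np_akn_ration to infty}.

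The main obstacle is verifying the Archimedean conditions \eqref{hkl4}--\eqref{hkl7}, because the hypothesis places no bound on $|c_n|$. My plan is to construct a non-decreasing auxiliary sequence $\{\tilde a_n\}$ which tracks $|a_n c_n|$ (up to a factor $2^{(\log_2 \tilde a_n)^\kappa}$) at every index $n$ where $|a_n c_n|$ exceeds the second branch $2^{n^{-3}(Kd+1)^n}$ of the max in \eqref{hkl5}, and otherwise equals $|a_n|$. Since $|c_n| \ge 1$ one has $\tilde a_n \ge |a_n|$, so the properties assumed of $\{a_n\}$ transfer directly: \eqref{hkl4} from $|a_n| \ge n^{1+\varepsilon}$; \eqref{hkl6} from $|b_n| \le 2^{(\log_2 |a_n|)^\kappa} \le 2^{(\log_2 \tilde a_n)^\kappa}$; and \eqref{hkl7} via $\limsup \tilde a_n^{1/2^n} \ge \limsup |a_n|^{1/2^n} = \infty$. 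The delicate step is checking the lower half of \eqref{hkl5} at those $n$ where $\tilde a_n$ has been inflated (relative to $|a_n c_n|$) by a larger earlier value $|a_m c_m|$; here the slack $2^{(\log_2 \tilde a_n)^\kappa}$ together with $\kappa < 1$ must absorb the inflation, which is the technical heart of the argument and demands a careful choice of envelope.

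For the transcendence statement I would invoke Theorem \ref{thm:alg.indep.} with $K = 1$ and the same reduction. Its condition \eqref{hkl5'} permits the constant $A$ in the secondary bound $2^{A^n}$ to be chosen arbitrarily (and possibly depending on the sequence $\{c_n\}$), and the assumption that \eqref{eq:limsup a_n A-n} holds for every positive integer $A$ matches exactly what the theorem requires. Accordingly, the envelope construction carries over with only cosmetic modifications to yield algebraic independence of the singleton $\{\alpha\}$, i.e., transcendence of $\alpha$.
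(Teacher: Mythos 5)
Your reduction to Theorem~\ref{thm:general} with $K=1$ is the right starting point, and you have correctly identified the obstacle: the sequence $\{|a_n c_n|\}$ need not be monotone, so the auxiliary sequence $\{\tilde a_n\}$ demanded by condition~\eqref{hkl4} cannot simply be $|a_n c_n|$. But the envelope you propose does not exist in general. Take $c_n = 1$ for all $n \ne N$ and $|c_N|$ enormous (large enough that $|a_N c_N|$ exceeds the secondary branch $2^{N^{-3}2^N}$ of~\eqref{hkl5}, which is possible since there is no bound on $|c_N|$). The lower half of~\eqref{hkl5} at index $N$ then forces $\tilde a_N \ge |a_N c_N| \cdot 2^{-(\log_2 \tilde a_N)^\kappa}$, and because $\kappa<1$ this makes $\tilde a_N \ge |a_N c_N|^{1-o(1)}$. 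Monotonicity forces $\tilde a_{N+1} \ge \tilde a_N$, yet the lower half of~\eqref{hkl5} at index $N+1$ simultaneously requires $\tilde a_{N+1} 2^{-(\log_2 \tilde a_{N+1})^\kappa} \le |a_{N+1} c_{N+1}| = a_{N+1}$, which is impossible when $a_{N+1} \ll |a_N c_N|^{1-o(1)}$: the slack factor $2^{(\log_2 \tilde a_{N+1})^\kappa}$ is subpolynomial in $\tilde a_{N+1}$ and cannot absorb a multiplicative gap that is itself a power of $\tilde a_{N+1}$. No ``careful choice of envelope'' can fix this; if you instead drop monotonicity you lose~\eqref{hkl4}.

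The idea you are missing is a \emph{reordering}. Since $\sum b_n/(a_n c_n)$ converges absolutely, its value is unchanged under any bijection $\sigma:\bN\to\bN$. The paper's proof chooses $\sigma$ so that $|A_n| := |a_{\sigma(n)} c_{\sigma(n)}|$ is non-decreasing and then applies Theorem~\ref{thm:general} with the auxiliary sequence equal to $|A_n|$ itself, so~\eqref{hkl5} is trivially satisfied. Conditions~\eqref{hkl4} and~\eqref{hkl7} survive because $|A_n| \ge a_n$ (for $m\ge n$ one has $|a_m c_m|\ge a_m\ge a_n$, so at most $n-1$ of the values $|a_m c_m|$ lie below $a_n$), and the $p$-adic conditions~\eqref{hkl1} and~\eqref{hkl2} survive because $\nu_p(A_n)=\nu_p(a_{\sigma(n)})$, $\sigma(\{1,\ldots,N\})$ exhausts $\bN$ as $N\to\infty$, and the running maximum of $\nu_p(a_m)$ is eventually $>C$ and hence attained at a unique index. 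A further small difference: the paper handles transcendence by letting $d$ run over all positive integers in Theorem~\ref{thm:general} rather than invoking Theorem~\ref{thm:alg.indep.}; this is cosmetic, but the reordering is not.
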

\begin{proof}
	Let $c_n\in \bZ$ with $p\nmid c_n$, and let $\sigma:\bN\to\bN$ be the bijection so that $|A_n|$ is non-decreasing where $A_n=a_{\sigma(n)} c_{\sigma(n)}$, and put $B_n = b_{\sigma(n)}$.
	If equation \eqref{eq:limsup a_n A-n} is satisfied, let $d\in\bN$ be arbitrary.
	If not, assume equation \eqref{eq:limsup:irr}, and put $d=1$.
	Since certainly $
	\sum_{n=1}^{\infty}\frac{b_n}{a_n c_n} = \sum_{n=1}^{\infty}\frac{B_n}{A_n},$
	the theorem follows if we can show that $\{A_n\}_{n=1}$ and $\{B_n\}_{n=1}^\infty$ satisfy the assumptions of Theorem \ref{thm:general}.
	We immediately notice that inequalities \eqref{hkl2}, \eqref{hkl5}, \eqref{hkl6}, and \eqref{hkl7} are satisfied.
	Since $A_n$ is non-decreasing and $|a_n c_n|\ge a_n \ge n^{1+\varepsilon}$, inequality \eqref{hkl4} must also be satisfied.
	As $p\nmid c_n, \gcd(a_n,b_n)$, it follows that $p\nmid \gcd(A_n, B_n)$, while the facts that $p\nmid c_n$, $\limsup_{n \rightarrow \infty} \nu_p(n)=\infty$, and $\nu_p(n)\ne\nu_p(m)$ when $\nu_p(n)$ and $m\ne n$ are large ensure that equation \eqref{hkl1} can be satisfied by at most one $n\le N$ when $N$ is large.
	Thus, all assumptions of Theorem \ref{thm:general} are satisfied.
\end{proof}
\begin{Remark}\label{rem:p-irrational}
	When $\{c_n\}_{n=1}^\infty$ is a sequence of non-zero integers, as is the case when all $c_n$ are non-divisible by any given prime, then $\alpha = \sum_{n=1}^{\infty} \frac{b_n}{a_n c_n}$ is absolutely convergent, and so the resulting number is the same if we reorder the terms.
	Hence, if we have $\nu_p(a_m)\ne \nu_p(a_n)$ for all $m\ne n$, we can freely reorder the terms to ensure that $\nu_p(a_n)$ is strictly increasing.
	Then $a_n\ge p^{\nu_p(a_n)}\ge p^{n-1}$, and so we automatically get the assumptions $\limsup\nu_p(a_n)=\infty$ and $a_n\ge n^{1+\varepsilon}$ when $n$ is sufficiently large, thus reducing the number of assumptions one needs to check when applying Theorem \ref{thm:p-irrational} to such a sequence.
\end{Remark}

In Theorem \ref{thm:general}, we might also consider what happens if we keep track of multiple prime numbers $p$ rather than just one.
This leads to the below theorem, the proof of which is almost identical to that of Theorem \ref{thm:general}.
As such, these two theorems will be proven in unison.

\begin{Theorem}
	\label{thm:general infinite}
	Let $K$ and $d$ be positive integers, let $\varepsilon$ and $\kappa$ be positive real numbers with $\kappa <1$ and let
	$\mathcal{P}$ be a set of infinitely many prime numbers.
	For $k=1,\ldots, K$, let $\{a_{k,n}\}_{n=1}^\infty$ and $\{b_{k,n}\}_{n=1}^\infty$ be sequences of non-zero integers with $p\nmid \gcd(a_{k,n},b_{k,n})$ for each fixed $p\in\cP$.
	Suppose that for each sufficiently large $N\in\bN$, exactly one $n\le N$ satisfies equation \eqref{hkl1} while
	\begin{equation}\label{hkl3}
		\max_{n=1,\dots ,N}\nu_p(a_{k,n}) > d \max_{n=1,\dots ,N}\nu_p(a_{k-1,n}),
	\end{equation}
	where $a_{0,n}=1$.
	Suppose that there is a sequence $\{a_n\}_{n=1}^\infty$ that satisfies assumptions \eqref{hkl4}, \eqref{hkl5}, \eqref{hkl6}, and \eqref{hkl7} for every $k=1,\dots ,K$ and $n\in\bN$.
	For $k=1,\ldots, K$, set
	$\alpha_k= \sum_{n=1}^\infty \frac{b_{k,n}}{a_{k,n}}$. 
	Then $(\alpha_1,\dots ,\alpha_K)$ is not the root of any non-zero polynomial of $K$ variables with integer coefficients and degree less than or equal to $d$.

	If $K=2$, this is also true when $d$ is replaced by 3 in equation \eqref{hkl3} and the conclusion is replaced by non-degenerate independence of order $d$.
\end{Theorem}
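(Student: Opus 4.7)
The proof closely parallels that of Theorem \ref{thm:general}---indeed, the authors propose to carry the two out jointly---and I spell out only the single substantive modification.

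Suppose, for contradiction, that $P\in\bZ[x_1,\dots,x_K]$ is a nonzero polynomial of degree at most $d$ with $P(\alpha_1,\dots,\alpha_K)=0$, and write $P=\sum_{\mathbf i} c_{\mathbf i} x_1^{i_1}\cdots x_K^{i_K}$. Because $\cP$ is infinite while $P$ has only finitely many nonzero coefficients $c_{\mathbf i}$, each of which has only finitely many prime divisors, we may fix a prime $p\in\cP$ such that $p\nmid c_{\mathbf i}$ for every $\mathbf i$. This is what replaces hypothesis \eqref{hkl2} from Theorem \ref{thm:general}: there, the widening gap $M_{k,N}-dM_{k-1,N}\to\infty$ was used to swamp the fixed values $\nu_p(c_{\mathbf i})$, while here we simply arrange $\nu_p(c_{\mathbf i})=0$ from the start, so that the weaker strict inequality \eqref{hkl3} suffices.

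Set $\beta_{k,N}=\sum_{n=1}^N b_{k,n}/a_{k,n}$ and $M_{k,N}=\max_{n\le N}\nu_p(a_{k,n})$. Condition \eqref{hkl1}, the ultrametric inequality, and $p\nmid\gcd(a_{k,n},b_{k,n})$ together give $\nu_p(\beta_{k,N})=-M_{k,N}$, so
\[
\nu_p\bigl(c_{\mathbf i}\beta_{1,N}^{i_1}\cdots\beta_{K,N}^{i_K}\bigr)=-\sum_{k=1}^K i_k M_{k,N}.
\]
The technical heart of the argument is the injectivity of the map $\mathbf i\mapsto\sum_k i_k M_{k,N}$ on multi-indices with $\sum_k i_k\le d$. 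Given distinct $\mathbf i,\mathbf i^*$, say $\mathbf i^*$ lex-greater with $k=K$ most significant, and letting $\ell$ be the largest index where they differ, one estimates
\[
\sum_k(i_k^*-i_k)M_{k,N}\ge (i_\ell^*-i_\ell)M_{\ell,N}-(d-i_\ell)M_{\ell-1,N}\ge M_{\ell,N}-dM_{\ell-1,N}>0,
\]
using $i_\ell^*-i_\ell\ge 1$, the bound $\sum_{k<\ell}(i_k^*-i_k)M_{k,N}\ge-(d-i_\ell)M_{\ell-1,N}$ (from $\sum_{k<\ell}i_k\le d-i_\ell$ and the monotonicity $M_{k,N}\le M_{\ell-1,N}$), and hypothesis \eqref{hkl3}. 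With this injectivity and $\nu_p(c_{\mathbf i})=0$, the lex-maximal $\mathbf i^*$ among multi-indices with $c_{\mathbf i^*}\ne 0$ yields the unique strict minimum of the $p$-adic valuations above, so by the ultrametric equality $\nu_p(P(\beta_{1,N},\dots,\beta_{K,N}))=-\sum_k i_k^* M_{k,N}<\infty$, whence $P(\beta_{1,N},\dots,\beta_{K,N})\ne 0$ for all sufficiently large $N$.

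The remainder is identical to the proof of Theorem \ref{thm:general}: one bounds $|P(\beta_{1,N},\dots,\beta_{K,N})|$ from below by the reciprocal of a common denominator built from $\prod_{k,n\le N}a_{k,n}^d$, from above by $C\sum_k|\alpha_k-\beta_{k,N}|$ via Lipschitzness of $P$ on a bounded set (combined with $P(\alpha)=0$), and derives a contradiction along the subsequence of $N$ supplied by \eqref{hkl7}, estimating both sides via \eqref{hkl4}--\eqref{hkl6}. For the $K=2$ non-degenerate case one splits: polynomials of degree $\le 3$ are dispatched by the above argument with $3$ in place of $d$, while polynomials of degree $>3$ whose projective curve satisfies $g(\mathcal{C}_P)\ge 2$ are dispatched by Faltings's theorem---if $P(\alpha_1,\alpha_2)=0$, the Archimedean comparison forces $P(\beta_{1,N},\beta_{2,N})=0$ for infinitely many $N$ (since the denominator lower bound is beaten by the tail estimate), producing infinitely many distinct rational points on $\mathcal{C}_P$ and contradicting $|\mathcal{C}_P(\bQ)|<\infty$.

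The principal obstacle is the injectivity claim: in contrast to Theorem \ref{thm:general}, where \eqref{hkl2} delivers an ever-widening gap that trivially dominates any fixed coefficient valuation, here one must combine the strict but tight integer inequality \eqref{hkl3} with the careful preparation of $p\in\cP$. Once this is in place, the Archimedean portion of the argument is standard in the Erd\H{o}s tradition.
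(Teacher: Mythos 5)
Your proposal is correct and follows essentially the same route as the paper: the paper proves this theorem by combining the same three ingredients (a $p$-adic valuation argument showing the colexicographically leading term of $P(\alpha_{1,N},\dots,\alpha_{K,N})$ has strictly smallest valuation, the Erd\H{o}s-style Archimedean comparison between the denominator lower bound and the tail upper bound, and Faltings for the $K=2$, degree $>3$ case), and your key modification---choosing $p\in\cP$ not dividing the coefficients so that the strict inequality \eqref{hkl3} replaces the divergence \eqref{hkl2}---is exactly the paper's device (the paper in fact only needs $p\nmid c_{j_1,\dots,j_K}$ for the leading coefficient). The only detail worth making explicit in the Faltings step is that the points $(\alpha_{1,N},\alpha_{2,N})$ are eventually pairwise distinct, which follows because \eqref{hkl1} and \eqref{hkl3} force the minimal denominators of the partial sums to tend to infinity.
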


As with Theorem \ref{thm:general}, one could replace $3$ in the last part by a number $d' > 3$ to cut down on the number of potentially `bad' polynomials

This theorem has corollaries in complete parallel to Theorems \ref{thm:alg.indep.} and \ref{thm:alg.indep. K=2}.
\begin{Theorem}\label{thm:alg.indep. infinite}
	Let $K$ be a positive integer, let $\cP$ be an infinite set of a prime numbers, and let $\varepsilon$ and $\kappa$ be positive real numbers with $\kappa <1$.
	For $k=1,\dots ,K$, let $\{a_{k,n}\}_{n=1}^\infty$ and $\{b_{k,n}\}_{n=1}^\infty$ be sequences of non-zero integers such that, for all $p\in\cP$, $p\nmid \gcd(a_{k,n},b_{k,n})$, assumption \eqref{hkl1} is satisfied for exactly one $n\le N$ when $N$ is sufficiently large, and if $k>1$, then equation \eqref{eq:np_akn_ration to infty} is satisfied.
	Suppose there is a non-decreasing sequence $\{a_n\}_{n=1}^\infty$ of positive integers that, for each $k=1,\dots ,K$ and all sufficiently large $A,n\in\bN$, satisfies assumptions \eqref{hkl4}, \eqref{hkl6}, \eqref{hkl5'}, and \eqref{eq:limsup a_n A-n}.
	For $k=1,\dots ,K$, set $\alpha_k= \sum_{n=1}^\infty \frac {b_{k,n}}{a_{k,n}}$. 
	Then $\alpha_1,\dots ,\alpha_K$ are algebraically independent over $\mathbb Q$. 
\end{Theorem}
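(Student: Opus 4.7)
The plan is to reduce to Theorem \ref{thm:general infinite}. Since every nonzero polynomial has some finite degree, it suffices to show that for every sufficiently large positive integer $d$, no nonzero polynomial in $\bZ[x_1,\dots,x_K]$ of degree at most $d$ vanishes at $(\alpha_1,\dots,\alpha_K)$. Fix such a $d$, and we aim to verify the hypotheses of Theorem \ref{thm:general infinite} for this $d$, the given $K$, $\varepsilon$, $\kappa$, and $\cP$.

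The conditions \eqref{hkl1}, \eqref{hkl4}, and \eqref{hkl6}, together with $p\nmid\gcd(a_{k,n},b_{k,n})$ for each $p\in\cP$, are given directly. For \eqref{hkl3} and each $p\in\cP$: when $k>1$, assumption \eqref{eq:np_akn_ration to infty} forces its quotient to exceed $d$ for large $N$, yielding $\max_{n\le N}\nu_p(a_{k,n}) > d(1+\max_{n\le N}\nu_p(a_{k-1,n})) \ge d\max_{n\le N}\nu_p(a_{k-1,n})$; when $k=1$, the right-hand side of \eqref{hkl3} is $0$, and the required positivity of $\max_{n\le N}\nu_p(a_{1,n})$ is forced by \eqref{hkl1}, since otherwise $\nu_p(a_{1,n})\equiv 0$ and the maximum would be attained at every index once $N\ge 2$.

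For the remaining conditions we exploit that $A$ in \eqref{hkl5'} and \eqref{eq:limsup a_n A-n} can be taken arbitrarily large. Taking $A := Kd+1$ in \eqref{eq:limsup a_n A-n} produces \eqref{hkl7}. Taking $A := Kd$ in \eqref{hkl5'}, and noting that $(Kd/(Kd+1))^n$ decays exponentially and hence lies below $n^{-3}$ for all large $n$, we get $2^{(Kd)^n}\le 2^{n^{-3}(Kd+1)^n}$, which converts \eqref{hkl5'} into \eqref{hkl5}. With every hypothesis of Theorem \ref{thm:general infinite} verified, that theorem yields the desired non-vanishing; letting $d$ range over all sufficiently large integers completes the proof. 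The only step requiring any thought is this calibration of $A$ to transform \eqref{hkl5'} into the exact form \eqref{hkl5}; everything else is essentially immediate and parallels the way Theorem \ref{thm:alg.indep.} is extracted from Theorem \ref{thm:general}.
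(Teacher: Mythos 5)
Your proposal is correct and matches the paper's intended derivation: the paper states Theorem \ref{thm:alg.indep. infinite} as a corollary ``in complete parallel'' to Theorem \ref{thm:alg.indep.}, obtained from Theorem \ref{thm:general infinite} by letting $d$ range over all large integers, and does not spell out the details. Your verification supplies precisely those details: \eqref{hkl3} for $k>1$ from \eqref{eq:np_akn_ration to infty}, for $k=1$ from the ``exactly one $n$'' clause of \eqref{hkl1}, \eqref{hkl7} from \eqref{eq:limsup a_n A-n} at $A=Kd+1$, and \eqref{hkl5} from \eqref{hkl5'} at $A=Kd$ using $(Kd/(Kd+1))^n < n^{-3}$ for large $n$ — all of which are sound.
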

\begin{Theorem}
	\label{thm:alg.indep. K=2 infinite}
	Let $\{a_{k,n}\}_{n=1}^\infty$, $\{b_{k,n}\}_{n=1}^\infty$, $\{a_{n}\}_{n=1}^\infty$, and $\alpha_k$ be given as in Theorem \ref{thm:general infinite} with $K=2$, $d=3$,
	and the additional assumption that for every positive integer $A$,
	\begin{equation*}
		\limsup_{n\rightarrow\infty} a_{n}^{\frac{1}{A^n}} =
		\infty.
	\end{equation*}
	Then $\alpha_1$ and $\alpha_2$ are non-degenerately algebraically independent over $\mathbb Q$. 
\end{Theorem}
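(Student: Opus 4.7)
The plan is to deduce Theorem \ref{thm:alg.indep. K=2 infinite} as a direct consequence of the $K=2$ variant of Theorem \ref{thm:general infinite}, applied once for every possible degree. Concretely, I will verify that for each integer $d\ge 3$, the hypotheses of Theorem \ref{thm:general infinite} (in its $K=2$ form) hold with that $d$. That theorem then produces non-degenerate independence of order $d$, and since this holds for every $d$ we obtain non-degenerate algebraic independence by the definition recalled before Theorem \ref{thm:general}.

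Fix $d\ge 3$ and check the hypotheses in turn. Condition \eqref{hkl1} does not involve $d$ and is assumed. In the $K=2$ form, \eqref{hkl3} is required with $d$ replaced by the constant $3$, so it reads $\max_{n\le N}\nu_p(a_{2,n}) > 3\max_{n\le N}\nu_p(a_{1,n})$; this is exactly what the $d=3$ setup of Theorem \ref{thm:general infinite} already supplies, and it does not depend on the larger $d$ we now wish to use. Conditions \eqref{hkl4} and \eqref{hkl6} are independent of $d$. The upper bound in \eqref{hkl5} with parameter $d$ is $\max\{a_n 2^{(\log_2 a_n)^\kappa}, 2^{n^{-3}(2d+1)^n}\}$, which for $d\ge 3$ is no smaller than the bound at $d=3$ since $(2d+1)^n\ge 7^n$, so \eqref{hkl5} still holds. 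Finally, \eqref{hkl7} becomes $\limsup_{n\to\infty} a_n^{1/(2d+1)^n}=\infty$, which is supplied by the additional hypothesis upon choosing $A=2d+1$.

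Applying the $K=2$ variant of Theorem \ref{thm:general infinite} now yields that no non-zero integer polynomial $P(x_1,x_2)$ of degree at most $d$ with $\deg P\le 3$ or $g(\mathcal{C}_P)\ge 2$ vanishes at $(\alpha_1,\alpha_2)$. Since $d\ge 3$ was arbitrary (and the cases $d\le 3$ are absorbed into the $d=3$ application), this holds for every $d$, which is the definition of non-degenerate algebraic independence. The argument meets no real obstacle: the whole content of the proof is the observation that the extra $\limsup$ assumption is precisely the upgrade of \eqref{hkl7} needed to accommodate arbitrary $d$, while the fixed constant $3$ in \eqref{hkl3} for the $K=2$ variant is exactly what spares us from having to strengthen the $p$-adic valuation hypothesis as $d$ grows.
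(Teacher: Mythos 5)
Your proposal is correct and matches the paper's intent exactly: the paper treats this theorem as an immediate corollary of the $K=2$ variant of Theorem \ref{thm:general infinite}, applied for each degree $d$, with the extra $\limsup$ hypothesis upgrading \eqref{hkl7} to arbitrary $d$ while the constant $3$ in \eqref{hkl3} stays fixed. Your verification of \eqref{hkl5} (the upper bound only weakens as $d$ grows) is the one point worth writing down, and you did.
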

\begin{Remark}\label{rem:thms alg.indep.}
	Theorems \ref{thm:alg.indep.} and \ref{thm:alg.indep. infinite} are very similar in their assumptions.
	In fact, the only differences are that Theorem \ref{thm:alg.indep. infinite} does not assume equation \eqref{eq:np_akn_ration to infty} for $K=1$ and that it makes its assumptions for infinitely many primes $p$.
	This is a consequence of how an unbounded $d$ makes $\nu_p(a_{k,n})$ unbounded when $k>1$, which in turn makes the differences between inequality \eqref{hkl3} and equation \eqref{hkl2} small when $d$ grows large.
	As such, Theorem \ref{thm:alg.indep. infinite} should only be used when $\nu_p(a_{n,1})$ is bounded for all fixed $p\in\cP$ (or when it is unknown if this is the case); otherwise, Theorem \ref{thm:alg.indep.} would be sufficient by just picking any one $p$ from $\cP$ with unbounded $\nu_p(a_{1,n})$.
\end{Remark}
Aiming for a result corresponding to Theorem \ref{thm:p-irrational}, we introduce notions of $\cP$-irrationality and $\cP$-transcendence when $\cP$ is a set of prime numbers. 

\begin{Definition}
	Let $\cP$ be a set of prime numbers.
	We say that a sequence $\{a_n\}_{n=1}^\infty$ is $\cP$-irrational or $\cP$-transcendental if the series $\sum_{n=1}^{\infty} \frac{1}{a_n c_n}$ converges to an irrational or transcendental number, respectively, for all sequences of integers $\{c_n\}_{n=1}^\infty$ with $p\nmid c_n$ for all $p\in\cP$.
\end{Definition}

By considerations similar to those in the last part of Remark \ref{rem:thms alg.indep.}, it is clear that when $K=1$, Theorem \ref{thm:general infinite} is only interesting when $\nu_p(a_{1,n})$ is bounded for all fixed $p$, since one could otherwise apply Theorem \ref{thm:general} with much less effort instead.
For that reason, we formulate the result corresponding to Theorem \ref{thm:p-irrational} as follows.


\begin{Theorem}\label{thm:P-irrational}
	Let $\cP$ be a set of infinitely many prime numbers, and let $\varepsilon$ and $\kappa$ be positive real numbers with $\kappa<1$.
	Let $\{a_n\}_{n=1}^\infty$ and $\{b_n\}_{n=1}^\infty$ be sequences of non-zero integers such that, for each fixed $p\in\cP$, $p\nmid \gcd(a_{n},b_{n})$ for all $n$, while $\nu_p(a_n)$ is bounded and attains its maximum value exactly once.
	Suppose that $a_{n+1}\ge a_n\ge n^{1+\varepsilon}$ and $|b_n|\le 2^{(\log_2 |a_n|)^\kappa}$.
	Then the sequence $\{a_n/b_n\}$ is $\cP$-irrational if equation \eqref{eq:limsup:irr} is satisfied, and it is $\cP$-transcendental if equation \eqref{eq:limsup a_n A-n} is satisfied for every positive integer $A$.
\end{Theorem}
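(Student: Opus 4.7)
The plan is to deduce this result from Theorem \ref{thm:general infinite} with $K=1$ by exactly the reordering trick used to derive Theorem \ref{thm:p-irrational} from Theorem \ref{thm:general}. I would fix an arbitrary integer sequence $\{c_n\}_{n=1}^\infty$ with $p\nmid c_n$ for every $p\in\cP$, note that $\sum b_n/(a_nc_n)$ converges absolutely (since $\kappa<1$ and $a_n\ge n^{1+\varepsilon}$ together make $2^{(\log_2 a_n)^\kappa}/a_n$ summable), and let $\sigma\colon\bN\to\bN$ be the bijection for which $|A_n|:=|a_{\sigma(n)}c_{\sigma(n)}|$ is non-decreasing. Writing $B_n=b_{\sigma(n)}$, the identity $\sum_n b_n/(a_nc_n)=\sum_n B_n/A_n$ reduces the theorem to checking that Theorem \ref{thm:general infinite} applies to $\{A_n\}$ and $\{B_n\}$ with $K=1$ and with the master sequence chosen to be $|A_n|$ itself; $d=1$ then delivers $\cP$-irrationality, and invoking the theorem for every $d\in\bN$ yields $\cP$-transcendence.

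Most of the hypotheses are direct verifications. The elementary fact that if $x_m\ge y_m$ for all $m$ then also $x_m^*\ge y_m^*$ after sorting in increasing order, applied with $x_m=|a_mc_m|$ and $y_m=|a_m|$, yields $|A_n|\ge|a_n|$. This immediately gives \eqref{hkl4} and transfers \eqref{hkl7} from $\{a_n\}$ to $\{A_n\}$ (for the transcendence statement one takes $A=d+1$ in the hypothesis $\limsup a_n^{1/A^n}=\infty$). Condition \eqref{hkl5} holds trivially because the master sequence is $|A_n|$ itself, while \eqref{hkl6} follows from $|a_{\sigma(n)}|\le|A_n|$ combined with $|b_m|\le 2^{(\log_2|a_m|)^\kappa}$, and the divisibility requirement $p\nmid\gcd(A_n,B_n)$ drops out of $p\nmid\gcd(a_n,b_n)$ and $p\nmid c_n$.

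The heart of the argument, and what I expect to be the main subtlety, is verifying \eqref{hkl1} and \eqref{hkl3} for every $p\in\cP$ after reordering. Since $p\nmid c_n$ gives $\nu_p(A_n)=\nu_p(a_{\sigma(n)})$, the sequence $\{\nu_p(A_n)\}_n$ is a permutation of $\{\nu_p(a_n)\}_n$. The assumption that the latter is bounded and attains its maximum value $M_p$ at a unique index $n_0$ then forces, as soon as $N\ge\sigma^{-1}(n_0)$, the prefix maximum to equal $M_p$ and to be attained uniquely (at $n=\sigma^{-1}(n_0)$), which is \eqref{hkl1}; and since a uniquely attained maximum of non-negative integers must be at least $1$, we have $M_p\ge 1$, so \eqref{hkl3} with $a_{0,n}=1$ reads $M_p>0$ and holds for large $N$ as well. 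It is precisely this sharpened uniqueness hypothesis, stronger than the all-distinct-or-bounded condition in Theorem \ref{thm:p-irrational}, that makes the direct appeal to Theorem \ref{thm:general infinite} go through without further work.
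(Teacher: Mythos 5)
Your proposal is correct and takes essentially the same route as the paper, whose proof simply states that the argument for Theorem \ref{thm:p-irrational} carries over with Theorem \ref{thm:general infinite} in place of Theorem \ref{thm:general} and with inequality \eqref{hkl3} in place of equation \eqref{hkl2}. Your explicit verification of \eqref{hkl1} and \eqref{hkl3} after reordering --- in particular the observation that a uniquely attained maximum of the non-negative integers $\nu_p(a_n)$ forces that maximum to be at least $1$ --- is exactly the detail the paper leaves implicit.
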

\begin{proof}
	The proof is essentially the same as that of Theorem \ref{thm:p-irrational}, except that we apply Theorem \ref{thm:general infinite} rather than Theorem \ref{thm:general}.
	As such, our interest in equation \eqref{hkl2} is replaced with inequality \eqref{hkl3}.
\end{proof}

\section{Examples and applications}

In this section, we give a number of examples of applications of our results. 
It is of interest that the terms of our series have varying signs since this was not allowed in previous papers, such as \cite{ak,Erdos,hanTransc,hanLinIndep,l}.
Let $f_1,\ldots, f_K$ denote arbitrary functions from $\bN$ into $\bZ$.
This way, we can express the varying signs as $(-1)^{f_k(n)}$.
Possible choices for $f_k(n)$ could, for instance, be the product $kn$, the number of divisors of $n$, or the Euler totient function of $n$, i.e., the number of $m=1,\ldots,n$ such that $\gcd(m,n)=1$.

We start by giving some simple examples to highlight the immediate applications of our results in terms of algebraic independence.
For these examples, use Theorem \ref{thm:alg.indep.}.

\begin{Example}
	\label{ex:thm2 n^k}
	Let $\{a_n\}_{n=1}^\infty$ be a non-decreasing sequence of positive odd integers such that for every positive integer $A$,
	\begin{equation*}
		\limsup_{n\rightarrow\infty} a_n^{\frac{1}{A^n}} =
		\infty .
	\end{equation*}
	Set
	$\alpha_k= \sum_{n=1}^\infty \frac {(-1)^{f_k(n)}}{a_n2^{n^{k}}}$, $k=1,\dots ,K$. Then the numbers $\alpha_1,\dots ,\alpha_K$ are algebraically independent over $\mathbb Q$.
\end{Example}


\begin{Example}\label{ex:thm2 nu2n}
	Let $K$ and $z$ be positive integers with $z\ge 2$, and let $\{a_n\}_{n=1}^\infty$ be a non-decreasing sequence of integers with $\gcd(a_n,z)=1$, $a_n\ge n^{1+\varepsilon}$, and
	\begin{equation*}
		\limsup_{n\rightarrow\infty} a_n^{\frac{1}{A^n}} =
		\infty,
	\end{equation*}
	for every positive integer $A$.
	Set
	$\alpha_k= \sum_{n=1}^\infty \frac {(-1)^{f_k(n)}}{a_n z^{\nu_2(n)^{k}}}$, $k=1,\dots ,K$. Then the numbers $\alpha_1,\dots ,\alpha_K$ are algebraically independent over $\mathbb Q$. 
\end{Example}
As is seen from the theorems, we have non-degenerate algebraic independence with weaker restrictions when we know that $K=2$, which gives us the below example by applying Theorem \ref{thm:alg.indep. K=2}.
\begin{Example}\label{ex:thm3}
	Let $z\ge 2$ be a positive integer, and let $\{a_n\}_{n=1}^\infty$ be a non-decreasing sequence of positive integers with $\gcd(z,a_n)=1$ and, for all positive integers $A$, $\limsup a_n^{1/A^n} = \infty$.
	For $k=1,2,\ldots$, set $\alpha_k = \sum_{n=1}^{\infty}\frac{(-1)^{f_k(n)}}{a_n z^{(3^k -1)n}}$.
	If $k\ne l$, then $\alpha_k$ and $\alpha_l$ are non-degenerately algebraically independent.
\end{Example}

%
%
Meanwhile, by using the more general Theorem \ref{thm:general}, we are also able to prove irrationality and linear independence of series where we are not currently able to determine algebraic independence, as seen in the next example.
\begin{Example}\label{ex:thm1}
	Let $K$ and $z$ be positive integers with $z\ge 2$, and let $\{a_n\}_{n=1}^\infty$ be a non-decreasing sequence of positive integers such that $\gcd(a_n,z)=1$ and $\limsup a_n^{1/(K+1)^n} = \infty$.
	For $k=1,\ldots, K$, set $\alpha_k =\sum_{n=1}^{\infty} \frac{(-1)^{f_k(n)}}{a_n z^{k\, \nu_p(n)}}$.
	Then $1,\alpha_1,\ldots,\alpha_K$ are linearly independent over $\bQ$.
\end{Example}	
\begin{Remark}\label{rem:zeta5}
	We are not able to prove that the number $\zeta(5)=\sum_{n=1}^\infty \frac 1{n^5}$ is irrational. Erd\H{o}s \cite{Erdos} proved that if $ \{a_n\}_{n=1}^\infty$ is a non-decreasing sequence of positive integers such that $
	\limsup_{n\rightarrow\infty} a_n^{\frac{1}{2^n}} =
	\infty ,$
	then
	$\alpha= \sum_{n=1}^\infty \frac 1{a_nn^5}$ is an  irrational number. 
	From Theorem \ref{thm:general} with $p=2$, we now obtain that if all $a_n$ are also odd, then the number $\sum_{n=1}^{\infty} \frac{(-1)^{f(n)}}{a_n n^5}$
	is irrational for all functions $f:\bN\to\bZ$.
\end{Remark}
\begin{Remark}\label{rem:zetaf,g}
	For integer functions $f:\bN\to\bZ$ and $g(n):\bN\to\bN$, write $\zeta_{f,g} (k) = \sum_{n=1}^{\infty} \frac{(-1)^{f(n)}}{g(n) n^k}$.
	Then $\zeta_{0,1}$ is Riemann's zeta function, where $0$ and $1$ denote the functions that are constantly $0$ and $1$, respectively.
	We get from Remark \ref{rem:zeta5} that $\zeta_{f,g}(5)$ is irrational when $\{g(n)\}_{n=1}^\infty$ defines a non-decreasing sequence of positive odd integers with $\limsup_{n\to\infty} g(n)^{1/2^n}=\infty$, regardless of $f$.
	In fact, $\zeta_{f,g}(k)$ is irrational for all such $g$ and all integers $k\ge 2$.
	However, if we remove the conditions that $g(n)$ is non-decreasing and that $\limsup_{n\to\infty} g(n) = \infty$ and fix $f(n)$ and $k\ge 2$, then we can pick $g$ so that $\zeta_{f,g}(k)$ is rational.
	If $f(n)$ is both infinitely often even and infinitely often odd, we can take a permutation $\sigma:\bN\to\bN$ such that
	\begin{align*}
		\zeta_{f,g}(k) = \sum_{n=1}^{\infty}\frac{(-1)^{f(n)}}{n^kg(n)}
		= \sum_{n=1}^{\infty}\frac{(-1)^{n}}{\sigma(n)^k g(\sigma(n))}.
	\end{align*}
	Then picking $g$ so that $g(\sigma(2m))=\sigma(2m-1)^k$ and $g(\sigma(2m-1))=\sigma(2m)^k$, we get $\zeta_{f,g}(k)=0$, which is rational.
	If $f(n)$ is even only finitely often, then $\zeta_{f,g}(k)$ is a rational number minus the number $\sum_{n=N}^{\infty}\frac{1}{n^k g(n)}$.
	By a result due to Han\v{c}l \cite{hanExprReal}, we can now choose $g$ so that this number is rational.
	If $f(n)$ is odd only finitely often, we just note that $\zeta_{f,g}(k)=-\zeta_{f+1,g}(k)$. 
	Then $1+f(n)$ is even only finitely often, and we can pick $g$ so that $\zeta_{f,g}(k)$ is rational by the previous consideration.
	
\end{Remark}
\begin{Open Problem}
	While it is well-known that $\zeta_{0,1}(2)=\zeta(2)=\pi^2/6$ is transcendental, and Apéry \cite{apery} showed that $\zeta_{0,1}(3)=\zeta(3)$ is irrational, the authors do not know if $\zeta_{f,1}(k) = \sum_{n=1}^{\infty}\frac{(-1)^{f(n)}}{n^k}$ is irrational in general for $k\ge 2$.
\end{Open Problem}

We now give examples of $p$-irrationality and $p$-transcendence of a sequence, using Theorem \ref{thm:p-irrational} and Remark \ref{rem:p-irrational}.
\begin{Example}\label{ex:p-irr}
	\label{hankrislauEx11}
	Let $ \{a_n\}_{n=1}^\infty$ be a non-decreasing sequence of positive odd integers with $\limsup_{n\rightarrow\infty} a_n^{1/2^n} =
	\infty .$
	Let $ \{r_n\}_{n=1}^\infty$ be a sequence of pairwise different non-negative integers.
	Then the sequence $ \{2^{r_n}a_n\}_{n=1}^\infty$ is 2-irrational. Likewise, for any integer $z \ge 2$, if we assume $\gcd(a_n,z)=1$ in place of $a_n$ being odd, then the sequence $ \{z^{r_n} a_n \}_{n=1}^\infty$ is $p$-irrational for all primes $p$ dividing $z$.
\end{Example}

\begin{Example}\label{ex:p-transc}
	Let $z\ge 2$ be a positive integer, and let $ \{a_n\}_{n=1}^\infty$ be a non-decreasing sequence of positive integers coprime with $z$ such that, for all $A\in\bN$,
	$\limsup_{n\rightarrow\infty} a_n^{1/A^n} =
	\infty .$
	Let $ \{r_n\}_{n=1}^\infty$ be a sequence of pairwise different non-negative integers.
	Then the sequence $ \{z^{r_n}a_n\}_{n=1}^\infty$ is $p$-transcendental for all primes $p$ dividing $z$.
\end{Example}

Finally, we present the below four examples of Theorems \ref{thm:general infinite}, \ref{thm:alg.indep. infinite}, \ref{thm:alg.indep. K=2 infinite}, and \ref{thm:P-irrational}, respectively.
Comparing these with Examples \ref{ex:thm1}, \ref{ex:thm2 n^k}, \ref{ex:thm3}, and \ref{ex:p-irr}, respectively, we see some of the differences in applicability between considering a single prime or infinitely many.


\begin{Example}
	Let $K$ be a positive integer, and let $ \{r_{n}\}_{n=1}^\infty$ be a strictly increasing sequence of positive integers such that 
	\begin{equation*}
		\limsup_{n\to\infty} \frac{r_n}{(K+1)^n} >0.
	\end{equation*}
	Let $\{p_n\}_{n=1}^\infty$ be an unbounded and non-decreasing sequence of prime numbers.
	For $k=1,\ldots, K$, set $\alpha_k = \sum_{n=1}^{\infty}\frac{(-1)^{f_k(n)}}{p_n^{k+r_n}}$.
	Then the numbers $1,\alpha_1,\ldots,\alpha_K$ are linearly independent over $\bQ$.
\end{Example}

\begin{Example}
	Let $\{p_n\}_{n=1}^\infty$ be a strictly increasing sequence of odd prime numbers, and let $\{r_n\}_{n=1}^\infty$ be a non-decreasing sequence of integers such that $r_n\ge p_n$ and $\limsup_{n\to\infty} (r_n / A^n) = \infty$ for all $A\in\bN$.
	Set 
	$\alpha_1 = \sum_{n=1}^{\infty}\frac{(-1)^{f_1(n)}}{ 2^{r_n} p_n }$
	and 
	$\alpha_{k+1} = \sum_{n=1}^{\infty}\frac{(-1)^{f_k(n)}}{ 2^{r_n} (p_1\cdots p_n)^{n^{k-1}} }$
	for each $k\in\bN$ with $k>1$. Then $\alpha_1,\ldots,\alpha_K$ are algebraically independent for all $K\in\bN$.
	
	For this example, it may not be clear that we can actually pick a sequence $\{a_n\}_{n=1}^\infty$ of integers that satisfies \eqref{hkl5'} for each $k$ and all large values of $A$ and $n$.
	Write $a_{1,n}=2^{r_n}p_n$ and $a_{k,n} = 2^{r_n}(p_1\cdots p_n)^{n^{k-1}}$ for $k>1$, fix $K$, and pick $a_n = 2^{r_n}$ and $\kappa=1/2$.
	Then the only assumption that is not trivially satisfied is the upper bound of \eqref{hkl5'} when $k>1$.
	Since $a_{1,n}<\cdots<a_{K,n}$, it suffices to prove this when $A=2$, $k=K>1$ and $a_{K,n}>2^{2^n}$.
	By using $\log_2$, the upper bound of \eqref{hkl5'} is equivalent to
	\begin{align*}
		r_n + \log_2\big((p_1\cdots p_n)^{n^k}\big)
		=\log_2 a_{1,n}
		\le \log_2 \big(a_n 2^{(\log_2 a_n)^\kappa}\big) 
		= r_{n} + r_n^{1/2},
	\end{align*}
	i.e.,
	\begin{equation*}
		r_n^{1/2} \ge \log_2\big((p_1\cdots p_n)^{n^k}\big) = n^k \sum_{m=1}^{n} \log_2 p_m.
	\end{equation*}
	If $r_n\ge n^{4K}$, then this is indeed satisfied, recalling that $r_n\ge p_n$ and calculating
	\begin{align*}
		r_n^{1/2} = r_n^{1/4} r_n^{1/4} 
		\ge (n^{4K})^{1/4} p_n^{1/4} 
		> n^{K-1} n \log_2 p_n
		\ge n^{K-1} \sum_{m=1}^{n}\log_2 p_m.
	\end{align*}
	If $p_n\ge n^{4K}$, we are therefore done, so consider the case $p_n<n^{4K}$.
	Since $a_{K,n}>2^{2^n}$, we then have
	\begin{align*}
		2^{ 2^n} < a_{K,n} 
		= 2^{r_n}(p_1\cdots p_n)^{n^{K-1}}
		< 2^{r_n} p_n^{n^K}
		< 2^{r_n} n^{4K n^K}. 
	\end{align*}
	As this clearly ensures that $r_n\ge n^{4K}$ for all large values of $n$, we are done.
\end{Example}

\begin{Example}
	Let $\{p_n\}_{n=1}^\infty$ be a strictly increasing sequence of odd prime numbers, and let $ \{r_{n}\}_{n=1}^\infty$ be a non-decreasing sequence of integers such that $r_n\ge p_n$ and $\limsup_{n\to\infty} (r_n/A^n) = \infty$ for all positive integers $A$.
	For $k=1,2,\ldots$, set $\alpha_k = \sum_{n=1}^{\infty}\frac{(-1)^{f_k(n)}}{2^{r_n} p_n^{3^k-1}}$.
	If $k\ne l$, then $\alpha_k$ and $\alpha_l$ are non-degenerately algebraically independent.
\end{Example}
\begin{Example}
	\label{hankrislauEx12}
	Let $\{p_n\}_{n=1}^\infty$ be an increasing sequence of prime numbers, let $\cP$ be an infinite subset of $\{p_n: n\in\bN\}$, and let $\{a_n\}_{n=1}^\infty$ be an increasing sequence of positive integers with $\limsup_{n\rightarrow\infty}(a_n p_n)^{1/2^n} =\infty$ and $p\nmid a_n$ for all $p\in\cP$.
	Then the sequences $\{a_n p_n\}_{n=1}^\infty$ and $\{a_n p_n^2 p_{n-1}\cdots p_1\}_{n=1}^\infty$ are each $\cP$-irrational.
	If furthermore, $\limsup_{n\rightarrow\infty}(a_n p_n)^{1/A^n} =\infty$
	for all positive integers $A$, then the sequences $\{a_n p_n\}_{n=1}^\infty$ and $\{a_n p_n^2 p_{n-1}\cdots p_1\}_{n=1}^\infty$ are each $\cP$-transcendental.
\end{Example}

\section{Preliminaries}\label{sec:prelim}

In this section, we give some auxiliary  results needed for the proofs of the main theorems. 
The first one is a slight strengthening of Lemma 5 of \cite{hns} when specialized to a single sequence $\{a_n/b_n\}$ with $b_n\le 2^{(\log_2 a_n)^\kappa}$.
\begin{Lemma}\label{lemma:hns}
	Let $\varepsilon>0$, $0<\kappa<1$, and $M\ge 0$.
	Let $\{a_n\}_{n=1}^\infty$ and $\{b_n\}_{n=1}^\infty$ be sequences of positive integers such that $a_n$ is non-decreasing and
	\begin{equation}\label{eq:limsup}
		\limsup_{n \rightarrow \infty} a_n^{ (M+ 2)^{-n} } = \infty.
	\end{equation}
	Suppose that for all sufficiently large $n$ that
	\begin{align*}
		a_n\ge n^{1+\varepsilon}
	\end{align*}
	and
	\begin{align*}
		b_n\le 2^{(\log a_n)^\kappa}.
	\end{align*}
	Then for any fixed $0<c<1$,
	\begin{equation*}
		\liminf_{N \rightarrow \infty}  2^{N^2 (\log_2 a_{N-1})^c} \left( \prod_{n=1}^{N-1} a_n\right)^{M+1} \sum_{n=N}^\infty \left\vert \frac{b_n}{a_n}\right\vert = 0.
	\end{equation*}
\end{Lemma}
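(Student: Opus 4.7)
The plan is to follow the Erdős-style argument used in Lemma~5 of \cite{hns}. The core idea is to extract from the hypothesis $\limsup_n a_n^{(M+2)^{-n}}=\infty$ a sparse subsequence of record indices $N_k$, meaning that $a_{N_k}^{(M+2)^{-N_k}} \ge a_n^{(M+2)^{-n}}$ for all $n<N_k$ and such that $a_{N_k}^{(M+2)^{-N_k}}\to\infty$. At such a record $N$, the defining property gives $\log_2 a_n \le (M+2)^{n-N}\log_2 a_N$ for every $n\le N-1$; summing this geometric series yields both $\log_2 a_{N-1}\le (\log_2 a_N)/(M+2)$ and the product bound
\[
\Bigl(\prod_{n=1}^{N-1}a_n\Bigr)^{M+1}\le a_N^{\,1-(M+2)^{-(N-1)}}.
\]

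Next I would bound the tail $\sum_{n=N}^\infty |b_n|/a_n$. Since $\kappa<1$, one has $2^{(\log_2 a_n)^\kappa}=a_n^{o(1)}$, and the function $x\mapsto 2^{(\log_2 x)^\kappa}/x$ is eventually decreasing. Combining this with the monotonicity $a_n\ge a_N$ for $n\ge N$ and with $a_n\ge n^{1+\varepsilon}$, I would split the sum at $n^\ast\approx a_N^{1/(1+\varepsilon)}$ and bound each piece separately (below $n^\ast$ using $a_n\ge a_N$; above $n^\ast$ using $a_n\ge n^{1+\varepsilon}$). This produces an estimate of the form
\[
\sum_{n=N}^\infty \frac{|b_n|}{a_n} \ll a_N^{-\delta}
\]
for a fixed $\delta=\delta(\varepsilon,\kappa)>0$.

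Combining the two bounds at a record $N$ and writing $L:=\log_2 a_N$, the expression in the lemma is controlled by $2^{N^2 L^c/(M+2)^c \,+\,(1-\delta-(M+2)^{-(N-1)})L}$. By choosing $\{N_k\}$ sparse enough that $L$ grows much faster than $(M+2)^{N_k}$ and exploiting $c<1$ so that $N^2 L^c/(M+2)^c$ is dominated by $L$ itself, one shows the exponent tends to $-\infty$ along the subsequence. The main obstacle is that neither the generic records product bound nor the generic polynomial tail bound is sharp in both the fast-growth and the slow-growth regimes simultaneously; a careful case split, or equivalently a sufficiently sparse choice of subsequence, is needed to push the coefficient of $L$ strictly negative. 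This is the delicate point handled in \cite{hns} and in the slight strengthening claimed here.
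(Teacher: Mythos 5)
Your outline correctly identifies the two ingredients (a product bound at record indices and a tail bound), but the way you propose to combine them does not close, and the step you defer to \cite{hns} is precisely where the real work lies. First, the exponent bookkeeping fails as written: since $b_N\ge 1$, the tail satisfies $\sum_{n\ge N}b_n/a_n\ge 1/a_N$, so your $\delta$ cannot exceed $1$; in fact the best available bounds are of the form $2^{(\log_2 a_N)^\Gamma}/a_N$ with $\Gamma<1$ (and even these require $a_n\ge 2^n$ on the relevant range, which is not guaranteed). Multiplying by your product bound $a_N^{\,1-(M+2)^{-(N-1)}}$ leaves a net exponent of roughly $N^2L^c+L^\Gamma-(M+2)^{-(N-1)}L$ with $L=\log_2 a_N$, and this is negative only when $L>(M+2)^{(N-1)/(1-\Gamma)}$ (up to polynomial factors in $N$). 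Second, your proposed fix --- ``choosing $\{N_k\}$ sparse enough that $L$ grows much faster than $(M+2)^{N_k}$'' --- is not available from the hypothesis: $\limsup a_n^{(M+2)^{-n}}=\infty$ only guarantees records with $L/(M+2)^N\to\infty$, which is far weaker than $L\ge (M+2)^{N/(1-\Gamma)}$. Indeed, the hard regime is exactly when $a_n<2^{(M+2+\delta)^n}$ eventually for every $\delta>0$, where no such sparse subsequence exists.

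The paper's proof handles this with two devices you are missing. It first splits on whether $\limsup a_n^{(M+2+\delta)^{-n}}=\infty$ for some fixed $\delta>0$; in that fast-growth case a direct argument with auxiliary indices $k_1<N\le k_2$ (chosen via a large parameter $z$) works, the gain coming from the ratio $(M+1)/(M+1+\delta)<1$ rather than from the tiny exponent $(M+2)^{-(N-1)}$. In the complementary slow-growth case, where $\log_2 a_N<(M+2+\delta_\Gamma)^N$ for a suitably small $\delta_\Gamma$, it uses a \emph{quantitative} record lemma (Lemma \ref{lem:>1+1/N2}): there are infinitely many $N$ with $a_N^{(M+2)^{-N}}>(1+N^{-2})\max_{n<N}a_n^{(M+2)^{-n}}$, which improves the product bound to $\prod_{n<N}a_n^{M+1}\le a_N(1+N^{-2})^{-(M+2)^N}\le a_N\,2^{-N^{-3}(M+2)^N}$. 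The gain $2^{-N^{-3}(M+2)^N}$ depends only on $N$, not on $L$, and dominates $2^{N^2L^c+L^\Gamma}$ precisely because $L$ is bounded by $(M+2+\delta_\Gamma)^N$ in this case; a further subcase on whether $a_n<2^n$ infinitely often is needed to legitimize the tail estimates. Without these two mechanisms your argument does not go through, so the proposal has a genuine gap rather than being an alternative proof.
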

Not surprisingly, the proof will closely follow that of Lemma 5 of \cite{hns} and use the same preliminary lemmas, below, which are proven in \cite{hns}.
In their original form, the lemmas had additional assumptions, but these were never used in their proofs and are thus omitted here.
They may also be extracted from the proof in \cite{Erdos}.
\begin{Lemma}\label{lem:sumBound general}
	Let $\{a_n\}_{n=1}^\infty$ and $\{b_n\}_{n=1}^\infty$ satisfy the assumptions of Lemma \ref{lemma:hns}.
	Then there exists a number $\gamma>0$ that does not depend on $N$ and such that for all sufficiently large $N$,
	\begin{equation*}
		\sum_{n=N}^{\infty} \frac{b_n}{a_n} \le \frac{1}{a_N^\gamma}.
	\end{equation*}
\end{Lemma}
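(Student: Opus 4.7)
The plan is to bound the tail sum directly by splitting it at a cutoff that balances the two available pointwise lower bounds on $a_n$, namely $a_n \ge a_N$ (from monotonicity) and $a_n \ge n^{1+\varepsilon}$. The key preliminary observation is that since $\kappa < 1$,
\[
\frac{b_n}{a_n} \le \frac{2^{(\log a_n)^\kappa}}{a_n} = a_n^{-(1 - \eta_n)},
\]
where $\eta_n \to 0$ as $n \to \infty$ (because $a_n \to \infty$) and $\eta_n$ is non-increasing in $n$ (because $\kappa - 1 < 0$ and $\log a_n$ is non-decreasing).

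A crude application of $a_n \ge n^{1+\varepsilon}$ alone yields only a bound of order $N^{-\varepsilon/2}$ on the tail, which fails to be of the form $a_N^{-\gamma}$ whenever $a_N$ grows faster than any polynomial in $N$. To overcome this, I would split the tail at $T := \lceil a_N^{1/(1+\varepsilon)} \rceil$, the index at which the two lower bounds on $a_n$ cross. For $N \le n < T$, the inequality $a_n \ge a_N$ together with $\eta_n \le \eta_N$ bounds each term by $a_N^{-(1-\eta_N)}$, so the partial sum is at most
\[
T \cdot a_N^{-(1-\eta_N)} \le 2\, a_N^{1/(1+\varepsilon) - 1 + \eta_N} = 2\, a_N^{-\varepsilon/(1+\varepsilon) + \eta_N}.
\]
For $n \ge T$, I would use $a_n \ge n^{1+\varepsilon}$ to obtain $b_n/a_n \le n^{-(1+\varepsilon)(1-\eta_n)} \le n^{-1-\varepsilon/2}$, valid once $N$ is so large that $\eta_n < \varepsilon/(2(1+\varepsilon))$ throughout; the resulting tail is then at most $C_\varepsilon\, T^{-\varepsilon/2} \le C_\varepsilon\, a_N^{-\varepsilon/(2(1+\varepsilon))}$.

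Both contributions are thus $O(a_N^{-\varepsilon/(2(1+\varepsilon))})$, and since $a_N \to \infty$ the multiplicative constant can be absorbed into the exponent at the cost of an arbitrarily small loss. Any $\gamma < \varepsilon/(2(1+\varepsilon))$ will work; for definiteness one may take $\gamma := \varepsilon/(3(1+\varepsilon))$. No serious obstacle arises in the argument: the one delicate point is simply choosing $T$ so that both halves of the split reduce to the same negative power of $a_N$, which is what forces the exponent $1/(1+\varepsilon)$. Note that hypothesis \eqref{eq:limsup} plays no role here; its strength will be used only for the considerably sharper, subsequence-dependent conclusion of Lemma \ref{lemma:hns} itself.
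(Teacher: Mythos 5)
Your argument is correct, and it supplies a complete proof of a lemma for which the paper itself gives none (the paper attributes Lemmas \ref{lem:sumBound general}--\ref{lem:>1+1/N2} to \cite{hns} and to the proof in \cite{Erdos}, without reproducing the derivations). The two-sided split at $T=\lceil a_N^{1/(1+\varepsilon)}\rceil$ is exactly the right idea: the front block is controlled by counting ($T$ terms each $\le a_N^{-(1-\eta_N)}$), the back block by the polynomial lower bound $a_n\ge n^{1+\varepsilon}$ together with $\eta_n$ eventually below $\varepsilon/(2(1+\varepsilon))$, and both reduce to $O(a_N^{-\varepsilon/(2(1+\varepsilon))})$, so any $\gamma<\varepsilon/(2(1+\varepsilon))$ works once $a_N$ is large enough to absorb constants. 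Your remark that \eqref{eq:limsup} is not needed here is also accurate. One minor cosmetic point: you should note explicitly that for $N$ large, $a_N\ge N^{1+\varepsilon}$ forces $T\ge N$, so the split is nontrivial; if ever $T\le N$ the first block is vacuous and the back-block estimate alone gives the bound since then $N\ge a_N^{1/(1+\varepsilon)}$. This is the same elementary Erd\H{o}s-style tail estimate underlying the cited sources, so no genuinely different route is involved.
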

\begin{Lemma}\label{lem:sumBound an>2n}
	Let $\{a_n\}_{n=1}^\infty$ and $\{b_n\}_{n=1}^\infty$ satisfy the assumptions of Lemma \ref{lemma:hns}.
	Suppose that $a_n\ge 2^n$ for all sufficiently large $n$.
	Then there is a fixed $0<\Gamma<1$ such that for all sufficiently large $N$,
	\begin{equation*}
		\sum_{n=N}^{\infty} \frac{b_n}{a_n} \le \frac{2^{(\log_2 a_N)^\Gamma}}{a_N}.
	\end{equation*}
\end{Lemma}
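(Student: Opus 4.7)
The plan is a direct tail estimate exploiting the interplay between the hypothesis $b_n \le 2^{(\log_2 a_n)^\kappa}$ and the lower bound $a_n \ge 2^n$. Set $L := \log_2 a_N$, so $a_N \ge 2^L$. Each summand satisfies
\[
\frac{b_n}{a_n} \le \frac{2^{(\log_2 a_n)^\kappa}}{a_n} = 2^{\phi(\log_2 a_n)},
\]
where $\phi(y) := y^\kappa - y$. Since $\phi'(y) = \kappa y^{\kappa-1} - 1 < 0$ once $y > \kappa^{1/(1-\kappa)}$, the function $\phi$ is strictly decreasing on the relevant range, and consequently the map $x \mapsto x^{-1}2^{(\log_2 x)^\kappa}$ is decreasing in $x$ for $x$ sufficiently large.

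I would then split the sum at the index $M := \lceil 2L \rceil$. For $N \le n < M$, the monotonicity just observed, together with $a_n \ge a_N$ (since $\{a_n\}$ is non-decreasing), bounds each term by $a_N^{-1} 2^{L^\kappa}$, for a total contribution of at most $(2L+1)\, a_N^{-1}\, 2^{L^\kappa}$. For $n \ge M$, the hypothesis $a_n \ge 2^n$ gives $\log_2 a_n \ge n$, so the same monotonicity yields $b_n/a_n \le 2^{n^\kappa - n}$; this tail sums, by comparison with a geometric series, to at most a constant times $2^{M^\kappa - M} \le a_N^{-2}\, 2^{(2L)^\kappa}$.

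Combining the two estimates,
\[
\sum_{n=N}^\infty \frac{b_n}{a_n} \le (2L+1)\, a_N^{-1}\, 2^{L^\kappa} \;+\; C\, a_N^{-2}\, 2^{(2L)^\kappa}.
\]
Since $a_N \ge 2^L \to \infty$ as $N \to \infty$, the second term is dominated by the first for $N$ large, so the sum is bounded by $a_N^{-1} \cdot 2^{L^\kappa + \log_2(3L)}$. Choosing any $\Gamma$ with $\kappa < \Gamma < 1$, the elementary inequality $L^\kappa + \log_2(3L) \le L^\Gamma$ holds for $L$ large, which gives the claimed bound.

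No step is really hard; the only obstacle is bookkeeping to verify that all thresholds can be met simultaneously for $N$ sufficiently large. Monotonicity of $\phi$ requires $\log_2 a_n$ large, which is ensured by $a_n \ge 2^n$ and $n \ge N$; the hypothesis on $b_n$ requires $n$ large; and passing from the combined estimate to the clean power $L^\Gamma$ requires $L \to \infty$, which follows from $L \ge N$. Note that the parameter $M$ from Lemma \ref{lemma:hns} plays no role here, confirming that the single assumption $a_n \ge 2^n$ is what drives the much sharper tail bound of this lemma relative to Lemma \ref{lem:sumBound general}.
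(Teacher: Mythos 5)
Your proof is correct. Note that the paper does not actually prove this lemma --- it is quoted from Lemma 5 of \cite{hns} (and credited to the argument in \cite{Erdos}) --- so there is no in-paper proof to compare against; your argument is the standard Erd\H{o}s-style tail estimate: split at an index comparable to $2\log_2 a_N$, bound the finitely many initial terms by monotonicity of $x\mapsto x^{-1}2^{(\log_2 x)^\kappa}$, and use $a_n\ge 2^n$ to make the far tail geometrically small, after which the polynomial factor $2L+1$ is absorbed by enlarging $\kappa$ to any $\Gamma\in(\kappa,1)$. The only blemishes are cosmetic: the constant inside your $\log_2(3L)$ should be slightly larger (the first block contributes a factor $2(2L+1)$ rather than $3L$), and the exponent bounding the second block is $(2L+1)^\kappa$ rather than $(2L)^\kappa$; neither affects the conclusion since both are $o(L^\Gamma-L^\kappa)$.
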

\begin{Lemma}\label{lem:sumBound an>2n sometimes}
	Let $\{a_n\}_{n=1}^\infty$ and $\{b_n\}_{n=1}^\infty$ satisfy the assumptions of Lemma \ref{lemma:hns}.
	Then there is a fixed number $0<\Gamma<1$ so that if $N$ and $Q$ are sufficiently large and $a_n\ge 2^n$ for $n=N,\ldots, Q$, then
	\begin{equation*}
		\sum_{n=N}^{Q} \frac{b_n}{a_n}\le \frac{2^{(\log_2 a_N)^\Gamma}}{a_N}.
	\end{equation*}
\end{Lemma}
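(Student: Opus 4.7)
My plan is to derive Lemma \ref{lem:sumBound an>2n sometimes} as a direct corollary of Lemma \ref{lem:sumBound an>2n}, by extending the finite-range hypothesis $a_n \ge 2^n$ for $n \in [N,Q]$ to an infinite-range one via an auxiliary sequence. I would set $\tilde{a}_n = \max\{a_n, 2^n\}$ for every $n \in \bN$; by hypothesis $\tilde{a}_n = a_n$ for all $n \in [N, Q]$, and by construction $\tilde{a}_n \ge 2^n$ for every $n$.

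Next I would verify that $\{\tilde{a}_n\}_{n=1}^\infty$ together with $\{b_n\}_{n=1}^\infty$ satisfies the hypotheses of Lemma \ref{lem:sumBound an>2n}. Monotonicity of $\tilde{a}_n$ is immediate since both $\{a_n\}$ and $\{2^n\}$ are non-decreasing. The inequalities $\tilde{a}_n \ge n^{1+\varepsilon}$ and $\limsup_{n\to\infty}\tilde{a}_n^{(M+2)^{-n}} = \infty$ follow from $\tilde{a}_n \ge a_n$. For the upper bound on $b_n$, monotonicity of $x \mapsto (\log_2 x)^\kappa$ together with $\tilde{a}_n \ge a_n$ gives $b_n \le 2^{(\log_2 a_n)^\kappa} \le 2^{(\log_2 \tilde{a}_n)^\kappa}$. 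Finally, $\tilde{a}_n \ge 2^n$ holds for every $n$ by construction. Crucially, the sequence $\{\tilde{a}_n\}$ depends only on the original $\{a_n\}$ and not on $N$ or $Q$.

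Invoking Lemma \ref{lem:sumBound an>2n} for $\{\tilde{a}_n\}$ then yields a fixed $\Gamma \in (0,1)$ and a threshold $N_0$ such that $\sum_{n=N}^{\infty} b_n/\tilde{a}_n \le 2^{(\log_2 \tilde{a}_N)^\Gamma}/\tilde{a}_N$ for every $N \ge N_0$. Since we are working in the regime where $a_n \ge 2^n$ on $[N, Q]$, we have $\tilde{a}_N = a_N$ and $\tilde{a}_n = a_n$ for $n \in [N, Q]$. Using that every term $b_n/\tilde{a}_n$ is non-negative, we can discard the tail $n > Q$ to obtain
\[
\sum_{n=N}^{Q}\frac{b_n}{a_n} = \sum_{n=N}^{Q}\frac{b_n}{\tilde{a}_n} \le \sum_{n=N}^{\infty}\frac{b_n}{\tilde{a}_n} \le \frac{2^{(\log_2 a_N)^\Gamma}}{a_N},
\]
which is the desired inequality.

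The only step that really requires any thought is the observation that every hypothesis of Lemma \ref{lem:sumBound an>2n} is preserved (or strengthened) under replacing $a_n$ by the larger value $\max\{a_n, 2^n\}$; once this is noticed, the result is immediate. Alternatively, one could prove the lemma directly in the spirit of \cite{hns, Erdos} by splitting the finite sum into the head $n = N$, the middle range $n \in (N, \log_2 a_N]$ where $a_n \ge a_N$, and the tail $n > \log_2 a_N$ where $a_n \ge 2^n$, exploiting in each part that the map $x \mapsto 2^{(\log_2 x)^\kappa}/x$ is eventually decreasing; but this direct route is longer and essentially recapitulates the proof of Lemma \ref{lem:sumBound an>2n}.
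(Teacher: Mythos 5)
Your reduction is correct. The paper itself does not prove Lemma \ref{lem:sumBound an>2n sometimes}; it defers to \cite{hns} (and to \cite{Erdos}), where the finite-window estimate is obtained directly by the usual splitting of the sum. Your route is genuinely different and arguably cleaner: by passing to $\tilde{a}_n = \max\{a_n, 2^n\}$ you convert the local hypothesis into the global one of Lemma \ref{lem:sumBound an>2n} and inherit its conclusion. The verification that all hypotheses of Lemma \ref{lemma:hns} are preserved under enlarging $a_n$ is the right thing to check and you check it: monotonicity is clear, the lower bounds $n^{1+\varepsilon}$ and the $\limsup$ condition only improve, and the numerator bound $b_n \le 2^{(\log a_n)^\kappa} \le 2^{(\log \tilde{a}_n)^\kappa}$ goes the right way because $(\log x)^\kappa$ is increasing. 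The two points on which the argument could have silently failed --- that $\Gamma$ and the threshold $N_0$ coming out of Lemma \ref{lem:sumBound an>2n} must depend only on the sequences and not on $N,Q$, and that discarding the tail $n>Q$ requires positivity of the terms (which holds since $a_n, b_n$ are positive integers in Lemma \ref{lemma:hns}) --- are both addressed explicitly. What your approach buys is that the ``sometimes'' lemma needs no independent analytic work once the ``always'' version is available; what the direct approach of \cite{hns} buys is independence from Lemma \ref{lem:sumBound an>2n}, but since both lemmas are assumed here anyway, your derivation is a perfectly good substitute.
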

\begin{Lemma}\label{lem:>1+1/N2}
	Let $\{y_n\}_{n=1}^\infty$ be an unbounded sequence of positive real numbers.
	Then there are infinitely many $N$ such that
	\begin{equation*}
		y_N > \bigg(1+\frac{1}{N^2}\bigg)\max_{1\le n< N} y_n.
	\end{equation*}
\end{Lemma}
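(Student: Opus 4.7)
The plan is to argue by contradiction. Suppose only finitely many $N$ satisfy the inequality, so there exists $N_0$ such that for every $N>N_0$ we have
\[
y_N \le \bigg(1+\frac{1}{N^2}\bigg)\max_{1\le n<N}y_n.
\]
Define the running maxima $M_N = \max_{1\le n\le N} y_n$. Since each $y_n$ is positive, $M_N$ is non-decreasing, and since the sequence is unbounded, $M_N\to\infty$. The goal is to contradict this by showing $M_N$ is bounded.

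The key step is to show that for every $N>N_0$,
\[
M_N \le \bigg(1+\frac{1}{N^2}\bigg)M_{N-1}.
\]
This follows by splitting into two cases: if $y_N\le M_{N-1}$, then $M_N = M_{N-1}$, and the inequality is trivial; if $y_N>M_{N-1}$, then $M_N=y_N$ and $\max_{1\le n<N}y_n = M_{N-1}$, so the assumed bound for $y_N$ gives exactly what we want. Iterating from $N_0$ up to $N$ yields
\[
M_N \le M_{N_0}\prod_{k=N_0+1}^{N}\bigg(1+\frac{1}{k^2}\bigg).
\]

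The final step is to observe that the infinite product $\prod_{k=1}^{\infty}(1+1/k^2)$ converges to a finite limit, because $\log(1+1/k^2)\le 1/k^2$ and $\sum_{k=1}^{\infty}1/k^2<\infty$. Hence $M_N$ is bounded above, contradicting $M_N\to\infty$. There is no real obstacle in this argument; the only slightly delicate point is the case split in verifying $M_N\le (1+1/N^2)M_{N-1}$, but both cases are immediate.
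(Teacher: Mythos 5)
Your proof is correct, and the argument (pass to the running maxima, observe the case split makes the recursive bound $M_N \le (1+1/N^2)M_{N-1}$ automatic, then use convergence of $\prod(1+1/k^2)$ to bound $M_N$ and contradict unboundedness) is the natural one. The paper does not reproduce a proof of this lemma, instead citing its source; your contradiction-plus-convergent-product argument is essentially the standard route.
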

By a simple induction argument, we notice that for $k<N$ and $\delta\ge 0$,
\begin{align}\nonumber
	(M+2+\delta)^{N} &= (M+2+\delta)^{N-1} + (M+1+\delta)(M+2+\delta)^{N-1}=\cdots 
	\\\label{eq:boundM+1+delta}
	&= (M+2+\delta)^{k} + (M+1+\delta)\sum_{n=k}^{N-1}(M+2+\delta)^{n},
\end{align}
which will be used for the proof of Lemma \ref{lemma:hns}.
Equation \eqref{eq:boundM+1+delta} also helps prove the below corollary to Lemma \ref{lem:>1+1/N2}.
\begin{Corollary}\label{cor:>1+1/N2}
	Let $\{a_n\}_{n=1}^\infty$ and $\{b_n\}_{n=1}^\infty$ satisfy the assumptions of Lemma \ref{lemma:hns}.
	Let $k$ be a positive integer. 
	Then for infinitely many $N>k$,
	\begin{equation*}
		a_N > \bigg(1 + \frac{1}{N^2}\bigg)^{(M+2)^N}\Big(\max_{k\le n< N} a_n^{(M+2)^{-n}}\Big)^{(M+2)^N}.
	\end{equation*}
	For such $N$, we further have
	\begin{equation*}
		a_N > \bigg(1 + \frac{1}{N^2}\bigg)^{(M+2)^N} \prod_{n=k}^{N-1}a_n^{M+1}.
	\end{equation*}
\end{Corollary}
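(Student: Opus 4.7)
The plan is to apply Lemma \ref{lem:>1+1/N2} to the sequence $y_n:=a_n^{(M+2)^{-n}}$ and then extract both inequalities by elementary manipulation, using equation \eqref{eq:boundM+1+delta} for the second.

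First I observe that $\{y_n\}_{n=1}^\infty$ is a sequence of positive real numbers that is unbounded by hypothesis \eqref{eq:limsup}. Lemma \ref{lem:>1+1/N2} therefore yields infinitely many $N$ with $y_N>(1+N^{-2})\max_{1\le n<N}y_n$. Discarding the finitely many $N\le k$ leaves infinitely many valid $N>k$. Since $\{k,\ldots,N-1\}\subseteq\{1,\ldots,N-1\}$, the maximum over the smaller set is at most the maximum over the larger one, so
\[
y_N>\bigg(1+\frac{1}{N^2}\bigg)\max_{k\le n<N}y_n.
\]
Raising to the $(M+2)^N$-th power, which is monotone on positive reals, converts this into the first displayed inequality of the corollary.

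For the second inequality, set $Y:=\max_{k\le n<N}y_n$. For each $n$ in the indexing range we have $a_n\le Y^{(M+2)^n}$, whence
\[
\prod_{n=k}^{N-1}a_n^{M+1}\le Y^{(M+1)\sum_{n=k}^{N-1}(M+2)^n}.
\]
Applying \eqref{eq:boundM+1+delta} with $\delta=0$ gives $(M+2)^N=(M+2)^k+(M+1)\sum_{n=k}^{N-1}(M+2)^n$, so in particular $(M+2)^N\ge(M+1)\sum_{n=k}^{N-1}(M+2)^n$. Since $Y\ge 1$ (the terms $a_n$ are positive integers), enlarging the exponent only enlarges the power of $Y$, and we obtain $Y^{(M+2)^N}\ge\prod_{n=k}^{N-1}a_n^{M+1}$. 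Substituting this bound into the first inequality yields the second.

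No genuine obstacle is anticipated here; the only subtlety is the passage from the max $\max_{1\le n<N}$ supplied by Lemma \ref{lem:>1+1/N2} to the max $\max_{k\le n<N}$ appearing in the corollary, which is handled trivially because restricting the index set can only shrink the maximum. The role of \eqref{eq:boundM+1+delta} is simply to replace the awkward sum $(M+1)\sum_{n=k}^{N-1}(M+2)^n$ in the exponent by the clean quantity $(M+2)^N$, which is precisely why that identity was singled out immediately before the corollary.
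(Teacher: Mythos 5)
Your proposal is correct and follows essentially the same route as the paper: apply Lemma \ref{lem:>1+1/N2} to $y_n=a_n^{(M+2)^{-n}}$ and then use \eqref{eq:boundM+1+delta} to bound the product $\prod_{n=k}^{N-1}a_n^{M+1}$ by $\big(\max_{k\le n<N}y_n\big)^{(M+2)^N}$. The only (immaterial) difference is that the paper redefines $y_n=y_k$ for $n<k$ so the two maxima coincide, whereas you keep the original sequence and simply observe that restricting the index set to $k\le n<N$ can only shrink the maximum.
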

\begin{proof}
	The first inequality follows immediately from Lemma \ref{lem:>1+1/N2} by taking $y_n = a_n^{(M+2)^{-n}}$ for $n\ge k$ and $y_n=y_k$ for $n<k$.
	We then use \eqref{eq:boundM+1+delta} to conclude
	\begin{align*}
		\Big(\max_{k\le n< N} a_n^{(M+2)^{-n}}\Big)^{(M+2)^N} 
		&\ge \Big(\max_{k\le n< N} a_n^{(M+2)^{-n}}\Big)^{(M+1)\sum_{n=k}^{N-1} (M+2)^n}
		\\&
		\ge \prod_{n=k}^{N-1}a_n^{M+1}.
		\qedhere
	\end{align*}
\end{proof}

\begin{proof}[Proof of Lemma \ref{lemma:hns}]
	To shorten notation, write
	\begin{align*}
		Z_N = 2^{N^2 (\log_2 a_{N-1})^c}\Bigg(\prod_{n=1}^{N-1} a_n\Bigg)^{M+1} \sum_{n=N}^{k_2}\frac{b_n}{a_n}.
	\end{align*}
	We here split the proof into two cases depending on whether
	\begin{equation}\label{eq:diverges fast}
		\limsup_{n\to\infty} a_n^{\left(M+2+\delta\right)^{-n}} = \infty
	\end{equation}
	is true some fixed $\delta>0$.
	
	
	\textbf{Case 1 (equation \eqref{eq:diverges fast} holds for some $\delta>0$).}
	Pick $0<\gamma<1$ as in Lemma \ref{lem:sumBound general}, and let $z>2$ be some sufficiently large number.
	Pick $k_1, k_2, N\in\bN$ as follows.
	Let $k_2$ be the smallest integer such that
	\begin{equation}\label{eq:case1k2}
		a_{k_2}^{\left(M+2+\delta\right)^{-k_2}} > z^{1/\gamma},
	\end{equation}
	let $k_1$ be the largest integer such that $k_1 < k_2$ and
	\begin{equation}\label{eq:case1k1}
		a_{k_1} \le z^{k_1},
	\end{equation}
	and let $N$ be the smallest number such that $N>k_1$ and
	\begin{equation}\label{eq:case1N}
		a_N^{\left(M+2+\delta\right)^{-N}} \ge z.
	\end{equation}
	Note that $N\le k_2$ and $N\to\infty$ as $z\to\infty$.
	By inequalities \eqref{eq:case1k1} and \eqref{eq:case1N}, $a_{n} < z^{(M+2+\delta)^{n}}$ when $k_1\le n<N$.
	Hence,
	\begin{equation}\label{eq:case1-2logBound}
		2^{N^2 (\log_2 a_{N-1})^c} \le 2^{N \left(M+2+\delta\right)^{(N-1)c} (\log_2 z)^c}
		< z^{N^2 \left(M+2+\delta\right)^{Nc}},
	\end{equation}
	while also
	\begin{equation*}
		\prod_{n=k_1}^{N-1} a_n < \prod_{n=k_1+1}^{N-1} z^{(M+2+\delta)^{n}}
		= z^{\sum_{n=k_1+1}^{N-1}(M+2+\delta)^{n}}.
	\end{equation*}
	From this and equation \eqref{eq:boundM+1+delta}, we obtain
	\begin{equation*}
		\prod_{n=k_1}^{N-1} a_n < z^{\frac{(M+2+\delta)^{N}}{M+1+\delta}},
	\end{equation*}
	while inequality \eqref{eq:case1k1} together with the facts that $N>k_1$ and $a_n$ is non-decreasing yields
	\begin{equation*}
		\prod_{n=1}^{k_1-1} a_n \le a_{k_1}^{k_1-1} < z^{N^2}.
	\end{equation*}
	Thus,
	\begin{equation}\label{eq:case1prodBound}
		\prod_{n=1}^{N-1} a_n^{M+1} < z^{(M+1)N^2 + (M+1) \frac{(M+2+\delta)^{N}}{M+1+\delta}}.
	\end{equation}

	Having a bound for the product in $Z_N$, we move on to bounding the infinite series.
	Let $0<\Gamma<1$ be given as in Lemma \ref{lem:sumBound an>2n sometimes}.
	Let $\zeta\in\big(\frac{M+1}{M+1+\delta}, 1\big)$ be a fixed number that does not depend on $z$.
	Since $k_1$ is the largest number less than $k_2$ satisfying \eqref{eq:case1k1}, we have $a_{n} > z^{n} > 2^{n}$ for $N\le n\le k_2$.
	By Lemma \ref{lem:sumBound an>2n sometimes}, this means that when $z$ (and thereby $N$) is sufficiently large, then 
	\begin{equation*}
		\sum_{n=N}^{k_2} \frac{b_n}{a_n}\le \frac{2^{(\log_2 a_N)^\Gamma}}{a_N} \le \frac{1}{a_N^{\zeta}}.
	\end{equation*}
	This and inequality \eqref{eq:case1N} imply
	\begin{equation*}
		\sum_{n=N}^{k_2} \frac{b_n}{a_n} \le z^{-(M+2+\delta)^N\zeta}.
	\end{equation*}
	From Lemma \ref{lem:sumBound general}, inequality \eqref{eq:case1k2}, and the fact that $a_n$ is non-decreasing, it follows for all sufficiently large $z$ that
	\begin{align*}
		\sum_{n=k_2+1}^{\infty}\frac{b_n}{a_n}
		\le a_{k_2+1}^{-\gamma} \le a_{k_2}^{-\gamma} \le z^{-(M+2+\delta)^{k_2}},
	\end{align*}
	and so we have
	\begin{equation}\label{eq:case1sumBound}
		\sum_{n=N}^{\infty}\frac{b_n}{a_n} \le z^{-(M+2+\delta)^N\zeta} + z^{-(M+2+\delta)^{k_2}}
		\le \frac{2}{z^{(M+2+\delta)^N\zeta}}.
	\end{equation}
	
	Write $\zeta' = \zeta- \frac{M+1}{M+1+\delta}$ and note that $\zeta'>0$.
	By inequalities \eqref{eq:case1prodBound}, \eqref{eq:case1sumBound}, and \eqref{eq:case1-2logBound}, we find that for sufficiently large $z$ (and thus $N$),
	\begin{align*}
		Z_N&=2^{N^2 (\log_2 a_{N-1})^c}\Bigg(\prod_{n=1}^{N-1} a_n^M\Bigg) \sum_{n=N}^{k_2}\frac{b_n}{a_n}
		\\&
		< z^{N^2 \left(M+2+\delta\right)^{Nc} + (M+1)N^2 + (M+2+\delta)^{N} \left( \frac{M+1}{M+1+\delta} - \zeta	\right)}
		\\&
		= z^{N^2 (M+2+\delta)^{Nc} + MN^2 - \zeta' \left(M+2+\delta\right)^{N}}
		<
		z^{- \frac{\zeta'}{2} \left(M+2+\delta\right)^{N}}.
	\end{align*}
	Hence, $Z_N$ tends to $0$ when $z$ grows toward infinity, and this case is complete.
	
	\textbf{Case 2 (equation \eqref{eq:diverges fast} does not hold for any $\delta>0$).}
	This case will be split into further 2 subcases, depending on whether $a_n<2^n$ infinitely often.
	Before we do that, we make an observation to be used for both cases.
	
	Given any fixed number $\Gamma$, set $\Gamma_0 = \max\{c,\Gamma\}$ and $\tilde{\Gamma}=(1+2\Gamma_0)/(2+\Gamma_0)$.
	Note that $0<\tilde{\Gamma}<1$.
	By the assumption that equation \eqref{eq:diverges fast} holds for no $\delta>0$, we may pick a small number $\delta_\Gamma>0$ that does not depend on $n$, such that
	\begin{equation*}
		(M+2 + \delta_\Gamma)^{(2+\Gamma_0)/3} < M+2
	\end{equation*}
	and $a_n < 2^{\left(M+2+\delta_\Gamma\right)^{n}}$ for all sufficiently large $n$.
	Due to this and the fact that $a_n$ is non-decreasing,
	\begin{align}\nonumber
		n^2 (\log_2 a_{n-1})^{c} + (\log_2 a_n)^{\Gamma}
		&< 2n^2  (\log_2 a_n)^{\tilde \Gamma}
		<2 n^2 \big(M+2+\delta_\Gamma\big)^{n\Gamma_0}
		\\&	\nonumber
		< 2n^2 (M+2)^{n\frac{3\Gamma_0}{2+\Gamma_0}}
		< (M+2)^{n\frac{1+2\Gamma_0}{2+\Gamma_0}} 
		\\&	\label{eq:case2bound2log2}
		= (M+2)^{\tilde{\Gamma}n},
	\end{align}
	for all sufficiently large $n$.
	Recalling $(1+n^{-2})=2^{\log_2(1+n^{-2})}$ and using the Taylor expansion of $\log_2 x$ around 1, we have that $(1+n^{-2})>2^{n^{-3}}$ when $n$ is large.
	From this and inequality \eqref{eq:case2bound2log2}, we obtain for all large enough $n$ that
	\begin{align}\label{eq:case2ZN}
		\frac{2^{n^2 (\log_2 a_n)^{c} + (\log_2 a_n)^{\Gamma}}}{(1+n^{-2})^{(M+2)^n}}
		< \frac{2^{(M+2)^{\tilde{\Gamma}n}}}{2^{n^{-3} (M+2)^n}}.
	\end{align}

%
	
	\textbf{Case 2a ($a_n< 2^n$ for at most finitely many $n$).}
	By Lemma \ref{lem:sumBound an>2n}, we may pick $0<\Gamma<1$ such that for all sufficiently large $N$,
	\begin{equation*}
		\sum_{n=N}^{\infty}\frac{b_n}{a_n} \le  \frac{2^{(\log_2 a_N)^\Gamma}}{a_N}.
	\end{equation*}
	This and Corollary \ref{cor:>1+1/N2} imply that for infinitely many $N$,
	\begin{align*}\nonumber
		Z_N  &< 2^{N^2 (\log_2 a_{N-1})^c} \frac{a_N}{(1+N^{-2})^{\left(M+2\right)^N}} \frac{2^{(\log_2 a_N)^\Gamma}}{a_N}
		\\&
		= \frac{2^{N^2 (\log_2 a_{N-1})^{c} + (\log_2 a_{N-1})^{\Gamma}} }{2^{\left(M+2\right)^N \log_2 (1+N^{-2})}}.
	\end{align*}
	From this and inequality \eqref{eq:case2ZN}, we then have
	\begin{align*}
		Z_N&
		< \frac{2^{N^2\left(M+2\right)^{\tilde{\Gamma}N} } }{2^{N^{-3} \left(M+2\right)^N}}
	\end{align*}
	for infinitely many $N$.
	Since $\tilde{\Gamma}<1$, the right-hand-side tends to 0 as $N$ grows towards infinity, and we are done.
	
	\textbf{Case 2b ($a_n < 2^n$ infinitely often).}
	Let $C>0$ be sufficiently large.
	Pick $k_1,k_2,N\in\bN$ that depend on $C$ as follows.
	Let $k_2$ be the smallest integer such that
	\begin{align}\label{eq:case2bk2}
		a_{k_2}^{\left(M+2 \right)^{-k_2}} > C,
	\end{align}
	and let $k_1$ be the largest integer such that $k_1<k_2$ and
	\begin{equation}\label{eq:case2bk1}
		a_{k_1} < 2^{k_1}.
	\end{equation}
	Since $k_2\to\infty$ as $C\to\infty$, the assumption that $a_n<2^n$ infinitely often implies that also $k_1\to\infty$ as $C\to\infty$.
	Using Corollary \ref{cor:>1+1/N2}, pick $N>k_1$ to be the smallest integer such that
	\begin{equation}\label{eq:case2bN}
		a_N > \bigg(1 + \frac{1}{N^2}\bigg)^{ (M+2)^N}\Big( \max_{k_1\le n< N} a_n^{(M+2)^{-n}} \Big)^{(M+2)^N}.
	\end{equation}
	Consequently, we get by induction that if $k_1<n<N$, then
	\begin{align*}
		a_n^{\left(M+2\right)^{-n}} &\le \bigg(1+\frac{1}{n^2}\bigg)\max_{k_1\le i< n} a_i^{\left(M+2\right)^{-i}} 
		\\&
		\le \cdots\le a_{k_1}^{\left(M+2\right)^{-k_1}} \prod_{m=k_1+1}^{n} \bigg(1+\frac{1}{m^2}\bigg).
	\end{align*}
	Therefore, each $n$ with $k_1\le n<N$ must satisfy
	\begin{equation}\label{eq:case2banBound}
		a_n < 8^{(M+2)^n} = 2^{3 (M+2)^n},
	\end{equation}
	using that $\prod_{m=1}^{\infty} (1+m^{-2})<4$ and that $a_{k_1}^{(M+2)^{-k_1}}<2$ by inequality \eqref{eq:case2bk1}.
	Note that inequalities \eqref{eq:case2bk2} and \eqref{eq:case2banBound} ensure $N\le k_2$ when $C$ is large.
	
	Using inequality \eqref{eq:case2bk1} along with the facts that $a_n$ is non-decreasing and $N>k_1$, we find
	\begin{equation}\label{eq:case2bProdK1}
		\prod_{n=1}^{k_1-1} a_n \le a_{k_1-1}^{k_1-1} < a_{k_1}^{k_1} < 2^{k_1^2} < 2^{N^2}.
	\end{equation}
	From inequalities \eqref{eq:case2banBound} and \eqref{eq:case2bProdK1}, we get
	\begin{equation*}
		\prod_{n=1}^{N-1} a_n^{M+1} = \Bigg(\prod_{n=1}^{k_1-1} a_n^{M+1}\Bigg) \prod_{n=k_1}^{N-1} a_n^{M+1} 
		<2^{N^2(M+1)+ 3 \sum_{n=k_1}^{N-1} (M+2)^n}.
	\end{equation*}
	Due to this and equation \eqref{eq:boundM+1+delta}, all large enough $C$ (and thus $N$) must satisfy
	\begin{align}\nonumber
		\prod_{n=1}^{N-1} a_n^{M+1} 
		&<2^{N^2(M+1)+ 3 \sum_{n=k_1}^{N-1} (M+2)^n}
		< 2^{N^2(M+1) + 3(M+2)^N} 
		\\& \label{eq:case2bprodBound1}
		<2^{4(M+2)^N}.
	\end{align}
	Meanwhile, inequality \eqref{eq:case2bN} allows us to apply Corollary \ref{cor:>1+1/N2} and find
	\begin{equation*}
		\prod_{n=k_1}^{N-1} a_n^{M+1} 
		\le  \frac{a_N}{(1+N^{-2})^{\left(M+2\right)^N}},
	\end{equation*}
	which together with inequality \eqref{eq:case2bProdK1} yields
	\begin{align}\nonumber
		\prod_{n=1}^{N-1} a_n^{M+1} 
		&= \Bigg(\prod_{n=1}^{k_1-1} a_n^{M+1}\Bigg) \prod_{n=k_1}^{N-1} a_n^{M+1} 
		\\&\label{eq:case2bprodBound2}
		< 2^{N^2(M+1)} \frac{a_N}{(1+N^{-2})^{\left(M+2\right)^N}}.
	\end{align}
	
	Having two upper bounds of $\prod_{n=1}^{N-1} a_n^{M+1}$, we move on to also bounding $\sum_{n=N}^{\infty}b_n/a_n$.
	Pick $\Gamma\in(0,1)$ as in Lemma \ref{lem:sumBound an>2n sometimes}.
	Since $k_1< N\le k_2$, $k_1$ is the greatest integer less than $k_2$ that satisfies inequality \eqref{eq:case2bk1}, and $a_{k_2}> 2^{k_2}$ due to inequality \eqref{eq:case2bk2}, we obtain from Lemma \ref{lem:sumBound an>2n sometimes} that
	\begin{equation}\label{eq:case2bSum partial}
		\sum_{n=N}^{k_2}\frac{b_n}{a_n} \le \frac{2^{(\log_2 a_N)^\Gamma}}{a_N}.
	\end{equation}
	Similarly, pick $\gamma\in(0,1)$ as in Lemma \ref{lem:sumBound general}.
	Then
	\begin{equation*}
		\sum_{n=k_2+1}^{\infty}\frac{b_n}{a_n} 
		\le \frac{1}{a_{k_2+1}^{\gamma}} \le \frac{1}{a_{k_2}^{\gamma}},
	\end{equation*}
	since $a_n$ is non-decreasing. 
	Estimating this further by applying inequality \eqref{eq:case2bk2} together with the fact that $k_2\ge N$, we get
	\begin{align*}
		\sum_{n=k_2+1}^{\infty}\frac{b_n}{a_n} 
		\le \frac{1}{a_{k_2}^{\gamma}}
		< \frac{1}{C^{\gamma (M+2)^{k_2}}}
		\le \frac{1}{C^{\gamma (M+2)^N}}.
	\end{align*}
	This and inequality \eqref{eq:case2bSum partial} imply
	\begin{equation}\label{eq:case2bSum}
		\sum_{n=N}^{\infty}\frac{b_n}{a_n}
		= \sum_{n=N}^{k_2} \frac{b_n}{a_n} + \sum_{n=k_2+1}^{\infty} \frac{b_n}{a_n}
		\le \frac{2^{(\log_2 a_N)^\Gamma}}{a_N} + \frac{1}{C^{\gamma (M+2)^N}}.
	\end{equation}
	
	From inequalities \eqref{eq:case2bprodBound1}, \eqref{eq:case2bprodBound2}, and \eqref{eq:case2bSum}, it follows that
	\begin{align}\nonumber
		Z_N =\,& 2^{N^2 (\log_2 a_{N-1})^c}\Bigg(\prod_{n=1}^{N-1} a_n\Bigg)^{M+1} \sum_{n=N}^{k_2}\frac{b_n}{a_n}
		\\\nonumber
		<\,& 
		2^{N^2 (\log_2 a_{N-1})^c} 
		\min\bigg\{
			2^{4(M+2)^N}, 
			\frac{2^{N^2(M+1)}a_N}{(1+N^{-2})^{\left(M+2\right)^N}}
		\bigg\}
		\\& \nonumber \cdot
		\bigg(\frac{2^{(\log_2 a_N)^\Gamma}}{a_N} + \frac{1}{C^{\gamma (M+2)^N}}\bigg)
		\\\label{eq:case2bZN}
		\le\,& \frac{ 2^{N^2 (\log_2 a_{N-1})^c + N^2(M+1) + (\log_2 a_N)^{\Gamma} }}{(1+N^{-2})^{\left(M+2\right)^N}}
		+ 
		\frac{2^{N^2 (\log_2 a_{N-1})^c + 4 \left(M+2\right)^N} }{C^{\gamma (M+2)^N}}.
	\end{align}
	By inequality \eqref{eq:case2ZN},
	\begin{equation*}
		\frac{ 2^{N^2 (\log_2 a_{N-1})^c + N^2(M+1) + (\log_2 a_N)^{\Gamma} }}{(1+N^{-2})^{\left(M+2\right)^N}}
		<
		\frac{2^{N^2(M+1)} 2^{N^2 (M+2)^{\tilde{\Gamma}N}}}{2^{N^{-3} (M+2)^N}},
	\end{equation*}
	which tends to $0$ as $C$ grows large, due to the facts that $N>k_1$, $k_1\to\infty$ as $C\to\infty$, and $\tilde{\Gamma}<1$.
	We are thus left to show that also the second term of \eqref{eq:case2bZN} goes to 0 when $C$ (and thus $N$) grows toward infinity. It follows from inequality \eqref{eq:case2banBound} that $\log_2 a_{N-1} < 3 (M+2)^N$, and so we estimate
	\begin{equation*}
		\frac{2^{N^2 (\log_2 a_{N-1})^c + 4 \left(M+2\right)^N} }{C^{\gamma (M+2)^N}}
		< \frac{2^{N^2 3^c \left(M+2\right)^{cN} + 4 \left(M+2\right)^N} }{C^{\gamma (M+2)^N}}.
	\end{equation*}
	Since $\gamma>0$ and $c<1$, this clearly tends to 0 as $C$ tends to infinity, and the proof is complete.
\end{proof}

Lemma \ref{lemma:hns} is to be used in connection with the below elementary result.
\begin{Lemma}\label{lemma:distance in polynomial}
	Let $R\ge 1$, let $P\in \mathbb{Z}[x_1,\ldots, x_K]$ be a polynomial, and let 
	$\alpha_1,\ldots,\alpha_K, \beta_1,\ldots,\beta_K\in\bC$ with $|\alpha_k|, |\beta_k|\le R$ for each $k$.
	Then there is a constant $C>0$, depending only on $P$ and $R$, such that
	\begin{equation*}
		|P(\alpha_1,\ldots, \alpha_K) - P(\beta_1,\ldots, \beta_K)|\le C \max_{1\le k\le K} |\alpha_k - \beta_k|
	\end{equation*}
\end{Lemma}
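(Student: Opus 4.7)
The plan is to reduce to the case of a single monomial by linearity, then telescope over the coordinates. Write
\[
P(x_1,\ldots,x_K) = \sum_{\mathbf{i}} c_{\mathbf{i}}\, x_1^{i_1}\cdots x_K^{i_K},
\]
a finite sum with $c_{\mathbf{i}} \in \mathbb{Z}$. By the triangle inequality, it suffices to produce, for each monomial, a bound of the form $|\alpha_1^{i_1}\cdots\alpha_K^{i_K} - \beta_1^{i_1}\cdots\beta_K^{i_K}| \le C_{\mathbf{i}}\,\max_k |\alpha_k-\beta_k|$, and then take $C = \sum_{\mathbf{i}} |c_{\mathbf{i}}|\, C_{\mathbf{i}}$.

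For a fixed multi-index $\mathbf{i}$, I would use the standard telescoping identity
\[
\alpha_1^{i_1}\cdots\alpha_K^{i_K} - \beta_1^{i_1}\cdots\beta_K^{i_K} = \sum_{j=1}^{K} \beta_1^{i_1}\cdots\beta_{j-1}^{i_{j-1}}\bigl(\alpha_j^{i_j}-\beta_j^{i_j}\bigr)\alpha_{j+1}^{i_{j+1}}\cdots\alpha_K^{i_K},
\]
and then factor each single-variable difference as
\[
\alpha_j^{i_j}-\beta_j^{i_j} = (\alpha_j-\beta_j)\bigl(\alpha_j^{i_j-1}+\alpha_j^{i_j-2}\beta_j+\cdots+\beta_j^{i_j-1}\bigr).
\]
Using $|\alpha_k|,|\beta_k|\le R$ and $R\ge 1$, the second factor is bounded in modulus by $i_j R^{i_j-1}$, while the remaining product of $\alpha$'s and $\beta$'s is bounded by $R^{i_1+\cdots+i_K - i_j}$. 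Hence each bracketed term in the telescoping sum has modulus at most $i_j R^{i_1+\cdots+i_K -1}|\alpha_j-\beta_j|$, and summing over $j$ gives the monomial bound with $C_{\mathbf{i}} = (i_1+\cdots+i_K)R^{i_1+\cdots+i_K-1}$.

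Combining these bounds through the triangle inequality yields
\[
|P(\boldsymbol\alpha) - P(\boldsymbol\beta)| \le \Bigl(\sum_{\mathbf{i}}|c_{\mathbf{i}}|\,(i_1+\cdots+i_K)\,R^{i_1+\cdots+i_K-1}\Bigr)\max_{1\le k\le K}|\alpha_k-\beta_k|,
\]
and the parenthesised quantity is a constant depending only on $P$ and $R$. This is a routine Lipschitz-type estimate, so no step should present a genuine obstacle; the only mild bookkeeping is keeping the factor $R\ge 1$ so that $R^{i_j-1}\cdot R^{(\text{rest})}$ is uniformly controlled by $R^{\deg P - 1}$.
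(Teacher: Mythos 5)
Your proof is correct and matches the paper's argument essentially step for step: both telescope the monomial difference coordinate by coordinate, factor out $\alpha_j - \beta_j$ from each single-variable difference, bound the geometric-sum factor by $i_j R^{i_j-1}$ and the remaining product by a power of $R$, and then sum over monomials with the triangle inequality. The only cosmetic difference is that the paper bounds each monomial's contribution uniformly by $dKR^{d-1}$ (with $d=\deg P$) rather than tracking $(i_1+\cdots+i_K)R^{i_1+\cdots+i_K-1}$, but both yield a constant depending only on $P$ and $R$.
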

\begin{proof}
	
	For $i_k\in\bN$, note that
	\begin{align}\nonumber
		\prod_{k=1}^{K} \alpha_k^{i_k}  - \prod_{k=1}^{K} \beta_k^{i_k}	&
		= \sum_{k=1}^{K} \Bigg(\prod_{j=1}^{k-1} \alpha_l^{i_l}\Bigg) (\alpha_k^{i_k} - \beta_k^{i_k}) \prod_{j=k+1}^{K} \beta_l^{i_l}
		\\&\label{eq:monomial diff}
		= \sum_{k=1}^{K} \Bigg((\alpha_k - \beta_k)\sum_{j=1}^{i_k} \alpha_k^{j-1} \beta_k^{k-j} \Bigg)\Bigg(\prod_{j=1}^{k-1} \alpha_l^{i_l}\Bigg) \prod_{j=k+1}^{K} \beta_l^{i_l}.
	\end{align}
	Write
	$P(x_1,\ldots,x_K) = \sum_{i_1,\ldots, i_k} c_{i_1,\ldots, i_K} x_1^{i_1}\cdots x_K^{i_K}$, $d=\deg P$, $\alpha=(\alpha_1,\ldots,\alpha_K)$, and $\beta=(\beta_1,\ldots,\beta_K)$.
	Then the triangle inequality, equation \eqref{eq:monomial diff}, and the facts that $\deg P = d$ and $|\alpha_k|, |\beta_k|\le R$ let us conclude
	\begin{align*}
		|P(\alpha_1,\ldots, \alpha_K) - P(\beta_1,\ldots&, \beta_K)| = \Bigg|\sum_{i_1,\ldots, i_k} c_{i_1,\ldots,i_K} \Bigg( \prod_{k=1}^{K} \alpha_k^{i_k}  - \prod_{k=1}^{K} \beta_k^{i_k} \Bigg)\Bigg|
		\\
		\le&\, \sum_{i_1,\ldots, i_k} |c_{i_1,\ldots,i_K}| \sum_{k=1}^{K} |\alpha_k - \beta_k| d R^{d-1}
		\\
		\le&\, \Bigg(	dK R^{d-1}\sum_{i_1,\ldots, i_k} |c_{i_1,\ldots,i_K}|	\Bigg) \max_{1\le k\le K}|\alpha_k - \beta_k|.
		\qedhere
	\end{align*}
\end{proof}

In order to handle the case of $K=2$ and $d>3$ in Theorems \ref{thm:general} and \ref{thm:general infinite}, we will need a few results from algebraic geometry.
Let $\bP^K$ denote the $K$-dimensional complex projective space, i.e., $\cP={(\bC^{K+1}\setminus 0)}\big/{\sim}$ where $\sim$ denotes the equivalence relation that $x\sim y$ if $x = \alpha y$ for some $\alpha\in\bC\setminus 0$.
For $i=1,\ldots, n$, let  $P_i\in\bC[x_1,\ldots, x_K]$ be a polynomial, and let $\tilde{P}_i\in\bC[x_1,\ldots,x_{K+1}]$ denote the unique homogeneous polynomial of minimal degree such that $P_i(x_1,\ldots,x_K)=\tilde{P}_i(x_1\ldots,x_K,1)$.
We then say that the set 
\begin{equation*}
	V(P_1,\ldots,P_n) = \{[x]\in\bP^K : \tilde{P}_1(x)=\cdots=\tilde{P}_n(x)=0\}
\end{equation*}
is a \textit{projective variety}. If the coefficients of $P_1,\ldots, P_n$ are all rational numbers, we say that 
$V(P_1,\ldots,P_n)$
is a projective
variety over $\bQ$.

Let $V_1$ and $V_2$ be two non-empty irreducible projective varieties that are proper subvarieties of $\bP^K$.
We then say that $f:V_1\dashrightarrow V_2$ is a \textit{rational map} if there is a proper subvariety $B_1\subsetneq V_1$ (which may be reducible) such that $f$ restricted to $V_1\setminus B_1$ is a well-defined rational function, i.e., each coordinate map is given as the quotient of two polynomials.
If there is another rational map $g:V_2\dashrightarrow V_1$ so that $g\circ f$ coincides with the identity where it is defined, then we say that $f$ is a \textit{birational map} and that $V_1$ and $V_2$ are \textit{birationally equivalent}.
We furthermore say that $f$ is a rational (resp. birational) map over $\bQ$ if the implied rational function can be chosen so that the coefficients of its coordinate maps are contained in $\bQ$, and we say that $V_1$ and $V_2$ are birationally $\bQ$-equivalent if there is a birational map $V_1\dashrightarrow V_2$ over $\bQ$.
We will need the below three theorems, of which the first is known as Fatings's Theorem, and the second is a consequence of the degree--genus formula.
Genus here refers to the geometric genus.
All three theorems can be found in \cite{HinSil}.
\begin{Theorem}[Faltings]
	Let $A$ be a non-singular irreducible projective variety over $\bQ$ of genus $g>1$ and dimension 1.
	Then $A$ has only finitely many rational points.
\end{Theorem}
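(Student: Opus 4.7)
Since the theorem stated is the celebrated Mordell conjecture, proved by Faltings, no short self-contained proof is realistic, and the authors correctly cite \cite{HinSil}. My plan would be to reproduce, in outline, Vojta's proof as later simplified by Bombieri, since it avoids the machinery of the Shafarevich conjecture and the Tate conjecture used in Faltings's original argument.

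First I would embed $A$ into its Jacobian $J$ via an Abel--Jacobi map $j\colon A\hookrightarrow J$. By the Mordell--Weil theorem, $J(\bQ)$ is a finitely generated abelian group, so $V := J(\bQ)\otimes\bR$ is a finite-dimensional Euclidean space once equipped with the N\'eron--Tate canonical height pairing $\la\cdot,\cdot\ra$; set $\hat h(P) = \la P, P\ra$. The rational points $A(\bQ)$ then embed, via $j$, as a subset of $V$ whose intersection with any ball is finite. The task is to rule out the possibility of an infinite sequence of rational points with unbounded height.

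The heart of the argument is Vojta's inequality: if $P,Q\in A(\bQ)$ have sufficiently large, comparable canonical heights and the cosine $\la P,Q\ra / \sqrt{\hat h(P)\, \hat h(Q)}$ is close to $1$, then one derives a contradiction by constructing, via a Siegel-type lemma, a global section of a suitable ample line bundle on $A\times A$ that vanishes to prescribed high order at $(P,Q)$, and by then bounding its arithmetic degree through an arithmetic Riemann--Roch / index theorem. This forces rational points of bounded height to lie in a sparse angular configuration in $V$. A sphere-packing argument in $V$ together with a Mumford-style quadratic gap principle then excludes an infinite tower of rational points of growing height, yielding finiteness.

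The main obstacle, and the reason no genuine proof can fit here, is Vojta's inequality itself: constructing the auxiliary small section and controlling its arithmetic degree relies on Arakelov intersection theory on the arithmetic surface associated to $A\times A$, the height machine for adelically metrized line bundles, and an arithmetic Riemann--Roch theorem. At the level of the present paper these would be invoked as black boxes, which is essentially what the authors do by appealing to \cite{HinSil}.
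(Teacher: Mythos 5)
The paper offers no proof of this statement: it is Faltings's resolution of the Mordell conjecture, imported verbatim from \cite{HinSil} and invoked as a black box in the proof of Lemma \ref{lemma:algebraic K=2}. Your decision to cite rather than prove it is therefore exactly the paper's approach, and your outline of the Bombieri--Vojta argument (Mordell--Weil, N\'eron--Tate heights, Vojta's inequality, Mumford's gap principle plus sphere packing) is an accurate summary of one standard proof.
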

\begin{Theorem}
	Let $V\subseteq\bP^2$ be a non-singular irreducible projective variety of degree $d$ and dimension 1.
	Then the genus of $V$ equals $(d-1)(d-2)/2$.
\end{Theorem}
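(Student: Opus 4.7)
The plan is to prove this via the Riemann--Hurwitz formula applied to a suitable linear projection from $V$ to $\bP^1$. Pick a point $q\in\bP^2\setminus V$ in sufficiently general position and project $V$ from $q$ onto a line $\ell\cong\bP^1$ that avoids $q$. Since a generic line through $q$ meets $V$ transversally in exactly $d$ points by B\'ezout's theorem, this construction defines a surjective morphism $\pi:V\to\bP^1$ of degree $d$.

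The next step is to count the ramification of $\pi$. A point $P\in V$ ramifies precisely when the line $\overline{qP}$ is tangent to $V$ at $P$. For $q$ in general position, every such tangent line meets $V$ in a single simple tangent point, so each ramification index equals $2$ and contributes $1$ to the Riemann--Hurwitz sum. The number of tangent lines to $V$ passing through $q$ equals the number of intersections of $V$ with the polar curve of $V$ relative to $q$; the latter is a plane curve of degree $d-1$, so B\'ezout gives $d(d-1)$ such tangent points.

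Applying the Riemann--Hurwitz formula then yields
\begin{equation*}
2g(V)-2 \;=\; d\bigl(2g(\bP^1)-2\bigr) + \sum_{P\in V}(e_P-1) \;=\; -2d + d(d-1) \;=\; d^2-3d,
\end{equation*}
whence $g(V)=(d-1)(d-2)/2$ as claimed.

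The main obstacle in carrying this out rigorously is the genericity argument: one has to verify that a sufficiently general $q\in\bP^2\setminus V$ sees only simple tangent lines to $V$, each touching $V$ at just one point. This can be established by a dimension count on the incidence variety of triples consisting of a point of $V$, its tangent line, and an external point on that tangent; alternatively, and perhaps more cleanly, one may invoke the adjunction formula $K_V=(K_{\bP^2}+V)|_V=\mathcal{O}_V(d-3)$ and take degrees to arrive at the identity $2g(V)-2=d(d-3)$ directly. Since the result is standard, we follow the latter route in citing \cite{HinSil}.
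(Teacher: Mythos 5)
The paper does not prove this statement at all: it is quoted as a known result (the degree--genus formula) with a reference to \cite{HinSil}, and it is only ever invoked as a black box in the proof of Lemma \ref{lemma:algebraic K=2}. So there is no ``paper proof'' to compare against; what you have written is a sketch of the classical argument, and it is essentially correct. Your Riemann--Hurwitz computation is right: projection from a general $q\in\bP^2\setminus V$ gives a degree-$d$ map to $\bP^1$, the ramification points are the simple tangencies, the polar curve of degree $d-1$ meets $V$ in $d(d-1)$ points by B\'ezout, and $2g-2=-2d+d(d-1)$ gives $g=(d-1)(d-2)/2$. The genuine work you correctly flag is the genericity statement: for general $q$ every tangent line through $q$ is simply tangent at a single point (no bitangents or flex tangents through $q$), and moreover the local intersection multiplicity of $V$ with its polar at each such point is exactly $1$ -- this last point is where a flex would contribute more than $1$ and must be excluded. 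Over $\bC$ this is a standard dimension count on the incidence variety, as you say. Your fallback via adjunction, $K_V=(K_{\bP^2}+V)\vert_V$ of degree $d(d-3)$, is cleaner and avoids the genericity issue entirely; either route is a legitimate replacement for the paper's bare citation.
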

\begin{Theorem} \label{thm:A414}
	Let $V\subseteq \mathbb{P}^2$ be an irreducible singular projective variety over $\bQ$ of dimension 1.
	Then $V$ is birationally $\bQ$-equivalent to a smooth irreducible projective variety over $\bQ$.
\end{Theorem}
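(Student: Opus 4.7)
The plan is to resolve the singularities of $V$ via normalization. I would construct the normalization $\tilde V$ of $V$ by covering $V$ with affine open pieces $V_i = \operatorname{Spec}(A_i)$ defined over $\bQ$, taking the integral closure $\tilde A_i$ of $A_i$ inside the common function field $K = \bQ(V)$, and gluing the affine schemes $\operatorname{Spec}(\tilde A_i)$ along their overlaps. Since integral closure inside the fraction field is an intrinsic construction that commutes with localization and does not require any extension of scalars, each $\tilde A_i$ is again a $\bQ$-algebra, so $\tilde V$ is naturally defined over $\bQ$. The canonical morphism $\pi : \tilde V \to V$ is an isomorphism over the normal (hence smooth, for curves) locus of $V$, and in particular it is birational over $\bQ$.

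The second key step is to verify that $\tilde V$ is smooth. Because $\tilde V$ is an irreducible one-dimensional normal scheme, each of its local rings is a Noetherian normal domain of Krull dimension one, hence a discrete valuation ring and in particular a regular local ring. Since we are working in characteristic zero, regularity of the local rings is equivalent to smoothness of $\tilde V$ over $\bQ$.

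The third step is to exhibit $\tilde V$ as a projective $\bQ$-variety. Irreducibility is inherited from $V$ through the equality of function fields. For projectivity, one invokes finiteness of integral closure for finitely generated $\bQ$-algebras of dimension one: each $\tilde A_i$ is finite as an $A_i$-module, so $\pi : \tilde V \to V$ is a finite morphism. Composing with the given closed immersion $V \hookrightarrow \bP^2$ presents $\tilde V$ as finite over a projective scheme, and any scheme finite over a projective scheme is itself projective.

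The main obstacle is the descent bookkeeping in the first step: one has to check carefully that the integral closures $\tilde A_i$ are actually finitely generated $\bQ$-algebras (so that the resulting object is a variety and not merely a scheme) and that the gluing data are coherent on the overlaps $V_i \cap V_j$. Once that is in place, smoothness (from normality in dimension one) and projectivity (from finiteness of the normalization) drop out essentially for free. An alternative approach would be to iteratively blow up the singular points of $V$ inside $\bP^2$; termination follows from the fact that $\sum_p r_p(r_p-1)/2$, taken over the singular points, strictly decreases under each well-chosen blow-up, so after finitely many steps one arrives at a smooth birational model defined over $\bQ$.
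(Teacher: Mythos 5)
The paper does not prove this statement at all: it is quoted verbatim as Theorem A.4.1.4 of Hindry--Silverman \cite{HinSil} and used as a black box in the proof of Lemma \ref{lemma:algebraic K=2}. Your normalization argument is exactly the standard proof behind that citation and is sound --- integral closure in the function field is intrinsic and stays defined over $\bQ$, finiteness of the integral closure makes the normalization finite over $V$ and hence projective, and a normal Noetherian one-dimensional local ring is a discrete valuation ring, which over the perfect field $\bQ$ gives smoothness --- so your proposal is correct and consistent with the source the paper relies on.
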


Finally, we will also need the below simple result.
\begin{Lemma}
	Let $V_1, V_2$ be birationally equivalent irreducible projective varieties of dimension 1.
	Then the implied rational map $f:V_1\dashrightarrow V_2$ is defined for all but finitely many elements of $V_1$.
\end{Lemma}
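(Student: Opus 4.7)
The plan is to unpack the definition of rational map given earlier in this section and combine it with basic dimension theory. By definition, the rational map $f:V_1\dashrightarrow V_2$ comes equipped with some proper subvariety $B_1\subsetneq V_1$ (possibly reducible) outside of which $f$ restricts to a well-defined rational function. It therefore suffices to show that any such $B_1$ must be a finite set of points.

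Since $V_1$ is irreducible of dimension $1$ and $B_1\subsetneq V_1$ is a proper closed subvariety, every irreducible component of $B_1$ has dimension strictly smaller than $\dim V_1 = 1$, hence dimension $0$. A projective variety of dimension $0$ consists of only finitely many points, and a closed subvariety of a Noetherian variety decomposes into finitely many irreducible components, so $B_1$ itself is finite. Consequently $f$ is defined on $V_1\setminus B_1$, the complement of a finite set, which is the desired conclusion.

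There is essentially no obstacle here; the argument is a direct appeal to the dimension theory of projective varieties, as found in the reference \cite{HinSil} used for the other algebraic geometry facts in this section. The only mild care needed is that the definition given earlier allows $B_1$ to be reducible, so one must invoke the Noetherian property to pass from ``finitely many components, each of dimension $0$'' to ``$B_1$ is a finite set''.
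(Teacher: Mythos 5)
Your argument is correct and is essentially the same as the paper's: since $V_1$ is irreducible of dimension $1$, the proper subvariety $B_1$ on which $f$ is undefined has dimension $0$ and is therefore finite. The extra remark about the Noetherian decomposition into finitely many components is a reasonable elaboration but adds nothing beyond what the paper's one-line proof already encapsulates.
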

\begin{proof}
	Because $V_1$ is irreducible, any non-empty proper subvariety $B$ of $V_1$ must be of a lower dimension than $V_1$.
	Since $V_1$ is of dimension 1, this makes $B$ finite.
\end{proof}

\section{Proof of Theorems \ref{thm:general} and \ref{thm:general infinite}}
For this section, let $d$ and $K$ be positive integers, and let $\{a_{k,n}\}_{n=1}^\infty$ and $\{b_{k,n}\}_{n=1}^\infty$ be sequences of non-zero integers such that each of the sequences $\{\alpha_{1,N}\}_{N=1}^\infty,\ldots, \{\alpha_{K,N}\}_{N=1}^\infty$ defined by
\begin{equation*}
	\alpha_{k,N} = \sum_{n=1}^{N} \frac{b_{k,n}}{a_{k,n}}
\end{equation*}
converge, and we write $\alpha_k = \lim_{N\to\infty}\alpha_{k,N}$ for the corresponding limit.

Theorems \ref{thm:general} and \ref{thm:general infinite} have almost identical proofs.
For that reason, we will prove them simultaneously.
Roughly speaking, their proofs can be divided into an analytical part, which is covered by the below lemma, and an algebraic part, which is covered by the subsequent two lemmas.
\begin{Lemma}\label{lemma:analytic}
	Let $\varepsilon$ and $\kappa$ be positive real numbers with $\kappa <1$.
	Suppose for all $k=1,\dots ,K$ and $n\in\bN$ that equations \eqref{hkl4}, \eqref{hkl5}, \eqref{hkl6}, and \eqref{hkl7} are satisfied.
	Let $P\in\mathbb{Z}[x_1,\ldots,x_K]$ be a polynomial of degree at most $d$.
	If $P(\alpha_{1,N},\ldots\alpha_{K,N}) \ne 0$ for all sufficiently large $N$, then $P(\alpha_{1},\ldots\alpha_{K}) \ne 0$.
\end{Lemma}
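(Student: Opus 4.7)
I would argue by contrapositive: assume $P(\alpha_{1},\ldots,\alpha_{K})=0$ and show that $P(\alpha_{1,N},\ldots,\alpha_{K,N})=0$ for infinitely many $N$. Absolute convergence of the series defining each $\alpha_k$ follows from \eqref{hkl4}--\eqref{hkl6} (since $|b_{k,n}/a_{k,n}| \le 2^{2(\log_2 a_n)^\kappa}/a_n$, $a_n \ge n^{1+\varepsilon}$, and $\kappa<1$), so all the numbers $\alpha_k,\alpha_{k,N}$ lie in some disk of radius $R$. By Lemma \ref{lemma:distance in polynomial} there is a constant $C=C(P,R)$ with
\[
|P(\alpha_{1,N},\ldots,\alpha_{K,N})| = |P(\alpha_{1,N},\ldots,\alpha_{K,N}) - P(\alpha_1,\ldots,\alpha_K)| \le C \max_{1 \le k \le K} \sum_{n=N+1}^\infty \frac{|b_{k,n}|}{|a_{k,n}|}.
\]

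The integrality input is that $P(\alpha_{1,N},\ldots,\alpha_{K,N})$ is a rational number whose denominator divides $D_N := \prod_{k=1}^K \bigl(\prod_{n=1}^N |a_{k,n}|\bigr)^d$, because every monomial of $P$ uses each variable to a power at most $d$. A nonzero rational with denominator at most $D_N$ has absolute value at least $1/D_N$, so $P(\alpha_{1,N},\ldots,\alpha_{K,N})\ne 0$ forces
\[
1 \le C \cdot D_N \cdot \max_{1 \le k \le K} \sum_{n=N+1}^\infty \frac{|b_{k,n}|}{|a_{k,n}|}.
\]
It therefore suffices to exhibit infinitely many $N$ violating this inequality.

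Using \eqref{hkl5} and \eqref{hkl6} I would remove the dependence on $k$: uniformly,
\[
\max_{1\le k\le K}\sum_{n>N}\frac{|b_{k,n}|}{|a_{k,n}|} \le \sum_{n>N} \frac{2^{2(\log_2 a_n)^\kappa}}{a_n}, \qquad D_N \le \Biggl(\prod_{n=1}^{N}\max\bigl\{a_n 2^{(\log_2 a_n)^\kappa},\, 2^{n^{-3}(Kd+1)^n}\bigr\}\Biggr)^{Kd}.
\]
Then I would invoke Lemma \ref{lemma:hns} with sequence $\{a_n\}$, numerator sequence $\{2^{2(\log_2 a_n)^\kappa}\}$ (which fits the lemma's hypothesis upon replacing $\kappa$ by some $\kappa'\in(\kappa,1)$), and with $M+1=Kd$; hypothesis \eqref{hkl7} is exactly the $\limsup$ condition \eqref{eq:limsup} for $M+2=Kd+1$. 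The conclusion yields infinitely many $N$ along which $2^{N^2(\log_2 a_{N-1})^{c}}\bigl(\prod_{n=1}^{N-1} a_n\bigr)^{Kd} \sum_{n\ge N}2^{2(\log_2 a_n)^\kappa}/a_n$ is arbitrarily small, for any fixed $c\in(0,1)$.

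The principal obstacle is to verify that the slack built into Lemma \ref{lemma:hns} actually absorbs the discrepancy $D_N/\bigl(\prod_{n=1}^{N-1}a_n\bigr)^{Kd}$. The contribution of the $2^{(\log_2 a_n)^\kappa}$ excesses is easily swallowed by the $2^{N^2(\log_2 a_{N-1})^c}$ prefactor once $c>\kappa$. The genuinely delicate contribution comes from indices $n$ where $a_n < 2^{n^{-3}(Kd+1)^n}$, producing an additional factor up to $2^{Kd\sum_{n} n^{-3}(Kd+1)^n}\sim 2^{Kd\cdot N^{-3}(Kd+1)^N}$. I expect this to be handled by exploiting the reserve factor $(1+N^{-2})^{(Kd+1)^N}>2^{N^{-3}(Kd+1)^N}$ coming from Corollary \ref{cor:>1+1/N2} within the proof of Lemma \ref{lemma:hns} (and, if necessary, a mild strengthening of the bookkeeping there); the very exponent $n^{-3}(Kd+1)^n$ appearing in \eqref{hkl5} is calibrated precisely against this reserve. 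Once that accounting is carried out, the right-hand side of the displayed inequality tends to $0$ along a subsequence of $N$, the bound $1 \le C D_N \max_k\sum$ fails, and $P(\alpha_{1,N},\ldots,\alpha_{K,N})$ must therefore vanish for infinitely many $N$, completing the contrapositive.
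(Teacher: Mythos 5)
Your plan is essentially the paper's proof: both compare the integrality lower bound on $|P(\alpha_{1,N},\ldots,\alpha_{K,N})|$ against the tail-sum upper bound on $|P(\alpha_{1,N},\ldots,\alpha_{K,N})-P(\alpha_1,\ldots,\alpha_K)|$ supplied by Lemma~\ref{lemma:distance in polynomial}, and both then feed Lemma~\ref{lemma:hns} with $M+1=Kd$ (so that $M+2=Kd+1$ matches \eqref{hkl7}) to exhibit infinitely many $N$ at which the former wins. Whether one argues by contrapositive or directly via the triangle inequality is cosmetic.

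What you flag about the second branch $2^{n^{-3}(Kd+1)^n}$ of the maximum in \eqref{hkl5} is, however, a genuine issue, and it is a place where your scrutiny actually exceeds the paper's own. The paper's written proof passes from $\big(\prod_{n\le N}\prod_{k}|a_{k,n}|\big)^{-d}$ directly to $\prod_{n\le N}\big(a_n^K 2^{(K-1)(\log_2 a_n)^\kappa}\big)^{-d}$, citing \eqref{hkl5}; but this step is only valid if $|a_{k,n}|\le a_n 2^{(\log_2 a_n)^\kappa}$ holds for every $k$ and every $n\le N$, and the second branch of the maximum is silently discarded. Several of the paper's own examples rely on that branch (for instance $a_{k,n}=a_n 2^{n^k}$ in Example~\ref{ex:thm2 n^k}, where $|a_{k,n}|$ exceeds $a_n 2^{(\log_2 a_n)^\kappa}$ whenever $a_n$ is sub-exponential), so it cannot simply be dropped. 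The resulting surplus, which after taking the product over $n\le N$ and $k\le K$ and raising to the power $d$ can be as large as roughly $2^{(Kd+1)N^{-3}(Kd+1)^N}$, is not covered by the stated form of Lemma~\ref{lemma:hns}. Your proposed remedy is the right one in spirit: the reserve $(1+N^{-2})^{(M+2)^N}$ produced via Corollary~\ref{cor:>1+1/N2} inside the proof of Lemma~\ref{lemma:hns} is in fact of size about $2^{N^{-2}(M+2)^N/\ln 2}$, which comfortably swallows the surplus once $N$ is large, and comparable room exists in each of the cases of that proof. But turning this into a complete argument requires restating Lemma~\ref{lemma:hns} with an additional factor (say $2^{N^{-3}(M+2)^N}$) on the left-hand side of its conclusion and re-verifying the case analysis; neither your plan nor the paper's proof actually carries out this bookkeeping. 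So, as written, the argument is incomplete at exactly the spot you identify — you are right to call it the principal obstacle, and right that the fix lives inside the proof of Lemma~\ref{lemma:hns} rather than in its statement.
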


\begin{proof}
Let $N$ be sufficiently large.
Then $P(\alpha_{1,N},\ldots,\alpha_{K,N})\ne 0$.
Therefore, since $\deg P\le d$ and $\alpha_{k,N}\prod_{n=1}^{N}|a_{k,n}|$ must be integral, we get
\begin{equation*}
	| P(\alpha_{1,N},\ldots, \alpha_{K,N})| 
	\ge \Bigg(
	\prod_{n=1}^{N}\prod_{k=1}^{K}|a_{k,n}|\Bigg)^{-d}
\end{equation*}
and so, by inequality \eqref{hkl5},
\begin{equation}\label{eq:P(alphakN)bound}
	| P(\alpha_{1,N},\ldots, \alpha_{K,N})| \ge \prod_{n=1}^{N}\Big( a_n^K 2^{(K-1)(\log_2 a_n)^\kappa}  \Big)^{-d}.
\end{equation}
Meanwhile, Lemma \ref{lemma:distance in polynomial} implies that there is a $C>0$ depending only on $P$ and on $\sup_{k,N}|\alpha_{k,N}|$ such that
\begin{align*}
	|P(\alpha_1,\ldots,\alpha_K) - P(\alpha_{1,N},\ldots,\alpha_{K,N})|
	&\le C\max_{1\le k\le K} |\alpha_k -\alpha_{k,N}|
	\\&
	= C\max_{1\le k\le K}\Bigg|	\sum_{n=N+1}^{\infty} \frac{b_{k,n}}{a_{k,n} }	\Bigg|.
\end{align*}
Therefore, due to the triangle inequality followed by inequalities \eqref{hkl5} and \eqref{hkl6},
\begin{align*}
	|P(\alpha_1,\ldots,\alpha_K) - P(\alpha_{1,N},\ldots,\alpha_{K,N})|
	&\le C\max_{1\le k\le K}	\sum_{n=N+1}^{\infty} \frac{|b_{k,n}|}{|a_{k,n}| }	
	\\
	\le C \sum_{n=N+1}^{\infty} \frac{2^{2(\log_2 a_n)^\kappa}}{a_n } 
	&= C\sum_{n=N+1}^{\infty} \frac{4^{(\log_2 a_n)^\kappa}}{a_n }.
\end{align*}
This, the triangle inequality, and estimate \eqref{eq:P(alphakN)bound} imply that
\begin{align*}
	|P(\alpha_1,\ldots,\alpha_K)| \ge &|P(\alpha_{1,N},\ldots,\alpha_{K,N})| - |P(\alpha_1,\ldots,\alpha_K) - P(\alpha_{1,N},\ldots,\alpha_{K,N})| 
	\\
	\ge&  \Bigg( \prod_{n=1}^{N}a_n^K 2^{(K-1)(\log_2 a_n)^\kappa}  \Bigg)^{-d}
	-  C\sum_{n=N+1}^{\infty} \frac{4^{(\log_2 a_n)^\kappa}}{a_n}.
\end{align*}
Hence, to prove the lemma, it is enough to show that there are infinitely many $N$ such that
\begin{equation*}
	\Bigg( \prod_{n=1}^{N}a_n^K 2^{(K-1)(\log_2 a_n)^\kappa}  \Bigg)^{-d} 
	> C\sum_{n=N+1}^{\infty} \frac{4^{(\log_2 a_n)^\kappa}}{a_n}
\end{equation*}
or, equivalently,
\begin{equation}\label{eq:analytical}
	\Bigg( \prod_{n=1}^{N}a_n^K 2^{(K-1)(\log_2 a_n )^\kappa}  \Bigg)^{d} \Bigg(\sum_{n=N+1}^{\infty} \frac{4^{(\log_2 a_n)^\kappa}}{a_n}\Bigg) < C^{-1}.
\end{equation}
Note that
\begin{align}\nonumber
	\Bigg( \prod_{n=1}^{N}&a_n^K 2^{(K-1)(\log_2 a_n )^\kappa}  \Bigg)^{d} \sum_{n=N+1}^{\infty} \frac{4^{(\log_2 a_n)^\kappa}}{a_n}
	\\&\label{eq:analyticZn}
	\le 2^{N^2 (\log_2a_N)^\kappa} \Bigg(\prod_{n=1}^{N} a_n\Bigg)^{dK}  \sum_{n=N+1}^{\infty} \frac{\left\lceil 4^{(\log_2 a_n)^{\kappa}}\right\rceil}{a_n},
\end{align}
for all large enough values of $N$.
Taking $\kappa'\in(\kappa,1)$, we have
$\left\lceil 4^{(\log_2 a_n)^{\kappa}}\right\rceil\le 2^{(\log_2 a_n)^{\kappa'}}$ when $N$ is sufficiently large.
Using this together with assumptions \eqref{hkl4} and \eqref{hkl7}, we may apply Lemma \ref{lemma:hns} with $M=dK-1$.
Therefore, we can pick infinitely many $N$ such that the right-hand-side of \eqref{eq:analyticZn} is smaller than $C^{-1}$, and so inequality \eqref{eq:analytical} follows, and the proof is complete.
\end{proof}

For the remaining two lemmas, let $\cP$ be a set of either a single prime number or infinitely many prime numbers, and assume that $p\nmid\gcd(b_{k,n}, a_{k,n})$ for all $k=1,\ldots, K$ and $n\in\bN$.
Recall the elementary facts that
\begin{equation}\label{eq:nupRules}
	\nu_p(ab) = \nu_p(a)+\nu_p(b),
	\qquad
	\nu_p(a+b) \begin{cases}
		= \nu_p(a),	&\text{if } \nu_p(a)<\nu_p(b),	\\
		\ge \nu_p(a),	&\text{if } \nu_p(a)=\nu_p(b).
	\end{cases} 
\end{equation}
From the first of these two facts, $\nu_p$ extends to a function $\bQ\to\bZ$ by $\nu_p(0)=\infty$ and $\nu_p(q) = \nu_p(a) - \nu_p(b)$ when $q=a/b$; note that $\nu_p(q)$ does not depend on the choice of representative $(a,b)$.

In the proof of the below lemma, we will order tuples of indices colexicographically, i.e., we will say that
\begin{equation*}
	(i_1,\ldots,i_{n+1}) < (j_1,\ldots,j_{n+1})
\end{equation*}
if $i_{n+1} < j_{n+1}$, or if both $i_{n+1} = j_{n+1}$ and $(i_1,\ldots, i_n) < (j_1,\ldots, j_n)$.

\begin{Lemma}\label{lemma:algebraic general}
	Suppose for each fixed $k=1,\ldots,K$ and $p\in\cP$ that equation \eqref{hkl1} holds for all $N\in\bN$ and that inequality \eqref{hkl3} holds for all sufficiently large $N$.
	If $\cP=\{p\}$, assume additionally that equation \eqref{hkl2} holds for each $k=1,\ldots,K$.
	Let $P\in\mathbb{Z}[x_1,\ldots,x_K]$ be a non-zero polynomial of degree at most $d$.
	Then $P(\alpha_{1,N},\ldots, \alpha_{K,N})\ne 0$ for all sufficiently large $N$.
\end{Lemma}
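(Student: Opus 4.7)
My approach is to exhibit, for an appropriately chosen $p\in\cP$, a single monomial in the expansion of $P(\alpha_{1,N},\ldots,\alpha_{K,N})$ whose $p$-adic valuation is strictly smaller than that of every other monomial, which forces the value to be non-zero. The first step is to choose $p$: when $\cP=\{p\}$ there is no choice, but if $\cP$ is infinite I would pick $p\in\cP$ that divides none of the finitely many non-zero coefficients of $P$. This ensures that in the infinite-prime case all relevant coefficients have $\nu_p=0$, so that only \eqref{hkl3} is needed, whereas in the single-prime case the coefficients contribute a bounded correction that must later be absorbed by the stronger hypothesis \eqref{hkl2}.

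Next I would compute the $p$-adic valuation of each $\alpha_{k,N}$. Writing $M_k(N) = \max_{n\le N}\nu_p(a_{k,n})$, assumption \eqref{hkl1} gives a unique index $n_k\le N$ realising this maximum. Combined with $p\nmid\gcd(a_{k,n_k},b_{k,n_k})$ and the fact (immediate from \eqref{hkl3} or \eqref{hkl2}) that $M_k(N)$ is eventually positive, the summand $b_{k,n_k}/a_{k,n_k}$ has $\nu_p$ equal to $-M_k(N)$, strictly smaller than that of every other summand of $\alpha_{k,N}$. The usual rules \eqref{eq:nupRules} then yield $\nu_p(\alpha_{k,N})=-M_k(N)$, and iterating \eqref{hkl3} from $M_0(N)=0$ further shows that $M_k(N)$ is strictly increasing in $k$ for large $N$.

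The algebraic heart of the proof is to identify, among the monomials of $P$, the one indexed by the colexicographically largest multi-index $(i_1^*,\ldots,i_K^*)$ whose coefficient is non-zero, and to show that its evaluation at the partial sums strictly minimises $\nu_p$. For any other non-zero-coefficient multi-index $(j_1,\ldots,j_K)$, take $l$ maximal with $i_l^*>j_l$; using that $M_k(N)$ increases in $k$ and that $\sum_{k<l} j_k\le d$, one obtains the estimate
\[
\sum_{k=1}^K (i_k^*-j_k)\,M_k(N) \;\ge\; M_l(N)-dM_{l-1}(N).
\]
The right-hand side is strictly positive by \eqref{hkl3} and diverges to $+\infty$ by \eqref{hkl2}, which is precisely what is needed to dominate the bounded coefficient correction $\nu_p(c_{i_1^*,\ldots,i_K^*})-\nu_p(c_{j_1,\ldots,j_K})$ for all sufficiently large $N$. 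One last application of \eqref{eq:nupRules} then gives $\nu_p(P(\alpha_{1,N},\ldots,\alpha_{K,N}))<\infty$, so the value is non-zero.

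The principal obstacle, I expect, is establishing that exactly one monomial attains the smallest $p$-adic valuation: everything else is routine valuation arithmetic. The subtlety lies in picking the right ordering of multi-indices (the colex order from the last coordinate, which mirrors the hypothesis structure making $\nu_p(a_{K,\cdot})$ much larger than $\nu_p(a_{K-1,\cdot})$, in turn much larger than $\nu_p(a_{K-2,\cdot})$, and so on) and in verifying that the gap $M_l(N)-dM_{l-1}(N)$ provides enough room to swallow the coefficient correction. Hypotheses \eqref{hkl2} and \eqref{hkl3} are tailored exactly to this task: divergence of the gap in the single-prime case, strict positivity in the infinite-prime case combined with the freedom to choose $p$ coprime to the coefficients.
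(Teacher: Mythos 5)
Your proposal matches the paper's proof in all essentials: the same choice of $p$ (forced when $\cP$ is a singleton, otherwise chosen coprime to the coefficients of $P$), the same computation $\nu_p(\alpha_{k,N})=-\max_{n\le N}\nu_p(a_{k,n})$ via \eqref{hkl1} and \eqref{eq:nupRules}, the same colexicographic ordering to single out the dominant monomial, and the same key estimate bounding $\sum_k(i_k^*-j_k)M_k(N)$ from below by $M_l(N)-dM_{l-1}(N)$ using $\deg P\le d$ and the monotonicity of $M_k(N)$ in $k$ coming from \eqref{hkl3}. Only the roles of the index letters are swapped relative to the paper; the argument is otherwise the same.
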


\begin{proof}
	Write
	\begin{align*}
		P(x_1,\ldots,x_K) = \sum_{i_1,\ldots, i_K} c_{i_1,\ldots,i_K} x_1^{i_1} \cdots x_K^{i_K},
	\end{align*}
	and let $c_{j_1,\ldots,j_K} x_1^{j_1} \cdots x_K^{j_K}$ be its leading term in colexicographic order.
	We will now prove that for a suitably chosen $p\in\cP$ and each sufficiently large $N$, the term $c_{j_1,\ldots,j_K} \alpha_{1,N}^{j_1} \cdots \alpha_{K,N}^{j_K}$ has strictly lower $p$-adic valuation than any of the other terms of $P(\alpha_{1,N},\ldots,\alpha_{K,N})$.
	By \eqref{eq:nupRules}, this will then imply the lemma.	
	To simplify notation, write
	\begin{equation}\label{eq:Ci}
		C_{i_1,\ldots, i_K} = \frac{c_{i_1,\ldots,i_K}}{ c_{j_1,\ldots,j_K} } 	\prod_{k=1}^{K}\alpha_{k,N}^{i_k-j_k}.
	\end{equation}
%
	If $|\cP|=1$, let $p$ be the unique prime in $\cP$.
	Otherwise, $|\cP|$ is infinite, and we instead pick $p$ such that 
	\begin{equation}\label{eq:nupci}
		\nu_p(c_{j_1,\ldots, j_K})= 0.
	\end{equation}
	Since clearly $C_{j_1,\ldots, j_K}=1$ by equation \eqref{eq:Ci}, the lemma follows if we can prove for all $(i_1,\ldots,i_K)\ne(j_1,\ldots, j_K)$ with $c_{i_1,\ldots, i_K}\ne 0$ that
	\begin{equation}\label{eq:nupCi}
		\nu_p(C_{i_1,\ldots, i_K})>0
	\end{equation}
	when $N$ is sufficiently large.
	
	Suppose $(i_1,\ldots,i_K)\ne(j_1,\ldots, j_K)$ with $c_{i_1,\ldots, i_K}\ne 0$, and let $m$ be the largest index such that $i_k\ne j_k$.
	Note that this implies $i_m<j_m$ since $c_{j_1,\ldots,j_K} x^{j_1}\cdots x^{j_K}$ is the leading term of $P$.
	When $N$ is sufficiently large, we have from inequality \eqref{hkl3} that $\max_{1\le n\le N}\nu_p(a_{k,n})>0$ for each $k=1,\ldots, K$, which together with \eqref{hkl1}, \eqref{eq:nupRules}, and the fact that $p\nmid\gcd(a_{k,n},b_{k,n})$ implies that $\nu_p(\alpha_{k,N}) = - \max_{1\le n\le N} \nu_p(a_{k,n})$.
	Using this and equation \eqref{eq:Ci}, we find that when $N$ is sufficiently large,
	\begin{align*}
		\nu_p(C_{i_1,\ldots, i_K}) &= \nu_p(c_{i_1,\ldots, i_K}) - \nu_p(c_{j_1,\ldots,j_K}) + \sum_{k=1}^{K} (i_k-j_k)\nu_p(\alpha_{k,N}) 
		\\&
		= \nu_p(c_{i_1,\ldots, i_K}) -\nu_p(c_{j_1,\ldots,j_K}) + \sum_{k=1}^{m} (j_k-i_k)\max_{1\le n\le N}\nu_p(a_{k,n}).
	\end{align*}
	Hence, using the facts that $i_m<j_m$, $c_{i_1,\ldots,i_K}\in\bZ$, and $a_{k,n}\in\bZ$,
	\begin{equation*}
		\nu_p(C_{i_1,\ldots, i_K}) \ge -\nu_p(c_{j_1,\ldots,j_K}) 
		+ \max_{1\le n\le N}\nu_p(a_{m,n})
		- \sum_{k=1}^{m-1} i_k\max_{1\le n\le N}\nu_p(a_{k,n}).
	\end{equation*}
	This, the fact that $\deg P\le d$, and inequality \eqref{hkl3} yield
	\begin{equation}\label{eq:nupCiBound}
		\nu_p(C_{i_1,\ldots, i_K})\ge -\nu_p(c_{j_1,\ldots,j_K}) + \max_{1\le n\le N} \nu_p(a_{m,n}) - d\max_{1\le n\le N}\nu_p(a_{m-1,n}).
	\end{equation}
	If $|\cP|=1$, then inequality \eqref{eq:nupCi} follows from inequality \eqref{eq:nupCiBound} and equation \eqref{hkl2} when $N$ is sufficiently large.
	Similarly, if $|\cP|=\infty$, then \eqref{eq:nupCi} is verified by applying inequality \eqref{eq:nupci} and equation \eqref{hkl3} to inequality \eqref{eq:nupCiBound} when $N$ is sufficiently large.
	This completes the proof.
\end{proof}
The final lemma of this paper provides an improvement to Lemma \ref{lemma:algebraic general} when $K=2$ and the degree of $P$ is greater than 3. 
\begin{Lemma}\label{lemma:algebraic K=2}
	Suppose the assumptions of Lemma \ref{lemma:algebraic general} are satisfied for $d=3$ and $K=2$. 
	Let $P\in\mathbb{Z}[x_1,x_2]$ be a non-zero polynomial of arbitrary degree such that $P$ defines a curve of genus $\ge 2$.
	Then $P(\alpha_{1,N},\alpha_{2,N})\ne 0$ for all sufficiently large $N$.
\end{Lemma}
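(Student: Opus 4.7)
The plan is to argue by contradiction using Faltings's theorem. Suppose $P(\alpha_{1,N},\alpha_{2,N})=0$ for infinitely many $N\in\bN$. I first factor $P=c\,P_1\cdots P_r$ into irreducible factors over $\bQ[x_1,x_2]$, clearing denominators so that each $P_i\in\bZ[x_1,x_2]$. By the pigeonhole principle, some irreducible factor $P_i$ vanishes at the pair $(\alpha_{1,N},\alpha_{2,N})$ for infinitely many $N$.

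I then split on $\deg P_i$. If $\deg P_i\le 3$, Lemma \ref{lemma:algebraic general} applied with $K=2$ and $d=3$ (whose hypotheses are exactly what the present lemma assumes) yields $P_i(\alpha_{1,N},\alpha_{2,N})\ne 0$ for all sufficiently large $N$, an immediate contradiction. If $\deg P_i\ge 4$, let $V_i\subseteq\bP^2$ be the irreducible projective curve cut out by $\tilde P_i$. When $V_i$ is smooth, the degree--genus formula gives $g(V_i)=(\deg P_i-1)(\deg P_i-2)/2\ge 3$; when $V_i$ is singular, Theorem \ref{thm:A414} produces a smooth irreducible projective curve $\tilde V_i$ over $\bQ$ that is birationally $\bQ$-equivalent to $V_i$, and the genus-$\ge 2$ hypothesis on $\mathcal{C}_P$ transfers, via the normalization identification, to $g(\tilde V_i)\ge 2$. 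Either way, Faltings's theorem gives $\tilde V_i(\bQ)$ finite, and the last lemma of Section \ref{sec:prelim} ensures that $V_i(\bQ)$ is finite as well, since the birational map $V_i\dashrightarrow\tilde V_i$ is defined off only finitely many points. Hence at most finitely many distinct rational points can lie on $V_i$, yet the sequence $(\alpha_{1,N},\alpha_{2,N})$ converges to $(\alpha_1,\alpha_2)$ with consecutive pairs distinct (because $b_{k,N+1}\ne 0$ forces $\alpha_{k,N+1}\ne\alpha_{k,N}$), so the sequence visits infinitely many distinct values; an infinite subsequence of partial sums fitting in the finite set $V_i(\bQ)$ would force the sequence to be eventually $(\alpha_1,\alpha_2)$, which a $p$-adic valuation argument using \eqref{hkl1} rules out.

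The main obstacle is twofold. First, the genus-$\ge 2$ hypothesis is stated for the full curve $\mathcal{C}_P$ rather than for individual irreducible components; for reducible $P$ one must carefully account for how pairwise intersections of components contribute to the singularity corrections of $g(\mathcal{C}_P)$, so as to guarantee that any degree-$\ge 4$ factor that is the culprit has geometric genus $\ge 2$. Second, the distinctness claim requires a $p$-adic argument ruling out the degenerate scenario that $\sum_{n>N}b_{k,n}/a_{k,n}=0$ for infinitely many $N$: the unique-maximum hypothesis \eqref{hkl1}, combined with the divergence in \eqref{hkl2} or \eqref{hkl3}, should imply each such tail has finite $p$-adic valuation, preventing exact cancellation.
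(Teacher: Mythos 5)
Your approach mirrors the paper's exactly: reduce to an irreducible factor, dispatch low degree with Lemma \ref{lemma:algebraic general}, and apply Faltings together with a distinctness argument for degree $\ge 4$. Of the two ``obstacles'' you flag, the second is handled in the paper by a cleaner observation than the tail-vanishing contradiction you sketch: the unique-maximum hypothesis \eqref{hkl1} combined with $p\nmid\gcd(a_{k,n},b_{k,n})$ and \eqref{eq:nupRules} gives $\nu_p(\alpha_{k,N})=-\max_{1\le n\le N}\nu_p(a_{k,n})$, which tends to $-\infty$ by \eqref{hkl2} or \eqref{hkl3}; hence the (minimal) denominator of $\alpha_{k,N}$ tends to infinity, so no rational value is repeated infinitely often, and a finite rational-point set immediately forces $P(\alpha_{1,N},\alpha_{2,N})\ne 0$ for large $N$. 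Your intermediate appeal to consecutive distinctness is unnecessary, and the claim that the sequence would have to be \emph{eventually} constant needs a convergence argument you did not spell out; the paper's valuation bound bypasses all of this. Your first obstacle, on the other hand, is a real imprecision that the paper itself glosses over: it writes ``WLOG $P$ is irreducible'' and later invokes the genus hypothesis as if it applied to each component, but for a reducible $P$ the hypothesis $g(\mathcal{C}_P)\ge 2$ does not automatically force a degree-$\ge 4$ irreducible factor to have geometric genus $\ge 2$. So you have correctly identified a gap shared by the paper; simply raising it does not resolve it, but you are not worse off than the original.
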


As before, we remark that choosing a different value $d'$ in place of $3$ will rule out polynomials of degree $\le d'$. This will make the genus requirement less restrictive. Indeed, there are fewer polynomials resulting in low genus if the degree becomes higher.

\begin{proof}[Proof of Theorem \ref{thm:general}]
	Since equation \eqref{hkl2} implies inequality \eqref{hkl3}, the theorem follows by combining Lemma \ref{lemma:analytic} with Lemmas \ref{lemma:algebraic general} and \ref{lemma:algebraic K=2}.
\end{proof}
\begin{proof}[Proof of Theorem \ref{thm:general infinite}]
	This follows by combining Lemma \ref{lemma:analytic} with Lemmas \ref{lemma:algebraic general} and \ref{lemma:algebraic K=2}.
\end{proof}
\begin{proof}[Proof of Lemma \ref{lemma:algebraic K=2}]
	Without loss of generality, $P$ is irreducible; if not, then we may write $P$ as the product of irreducible polynomials $P_1,\ldots, P_L\in\bZ[x_1,x_2]$, and we would then just have to prove the lemma for each of these $P_l$.	
	If $\deg P\le 3$, the statement follows from Lemma \ref{lemma:algebraic general}, so we may assume $\deg P>3$.
	
	Since $p\nmid\gcd(b_{k,n},a_{k,n})$ for all $p\in\cP$, equations \eqref{hkl1} and \eqref{eq:nupRules} together with either \eqref{hkl3} (if $|\cP|=\infty$) or \eqref{hkl2} (if $\cP=\{p\}$) implies that the minimal denominator of $\alpha_{k,N}$ tends to infinity as $N$ tends to infinity.
	Hence, $\alpha_{k,N}$ does not take the same value infinitely often.
	
		Introducing an extra variable $X_3$, let $\tilde{P}\in\mathbb{Z}[X_1,X_2,X_3]$ be the homogenised version of $P$, and consider the projective curve $\mathcal{C}_P = \{[x] \in \mathbb{P}^2 : \tilde{P}(x)=0\}$.
		Since $P(\alpha_{1,N},\alpha_{2,N})=\tilde{P}(\alpha_{1,N},\alpha_{2,N},1)$ and since $\alpha_{k,N}$ is always rational but does not take the same value infinitely often, it suffices to prove that $\mathcal{C}_P$ contains only finitely many rational points.
	
	
	Suppose first that $\mathcal{C}_P$ is non-singular. By the degree--genus formula \cite[Theorem A.4.2.6.]{HinSil}, the genus of $\mathcal{C}_P$ is at least $2$ since $P$ is irreducible with $\deg P>3$. By Faltings's Theorem, such a variety contains at most finitely many rational points, and the lemma is proven.
	
	Next, suppose that $\mathcal{C}_P$ is singular. In this case, we may normalise $V_P$, by appealing to e.g. \cite[Theorem A4.1.4.]{HinSil}, to obtain a smooth projective curve birationally equivalent to $\mathcal{C}_P$. As mentioned in the introduction, this may result in a curve of lower genus than predicted by the degree--genus formula. Ordinary singularities result in a decrease of $\frac{1}{2} r (r-1)$, where $r$ is the multiplicity of the singularity. Non-ordinary singularities decrease the genus by a quantity which must be calculated on a case-by-case basis, but such polynomials are very rare indeed. At any rate, the assumptions of the lemma is that the resulting curve has genus at least $2$, so by Falting's Theorem there are only finitely many rational points on $\mathcal{C}_P$. For the original polynomial (before normalisation), we observe that desingularising again would not increase the number of rational points, and we are done.
\end{proof}

\textbf{Acknowledgements.} 
We thank Professor Jan \v{S}ustek from the Department of Mathematics, University of Ostrava for valuable remarks on the paper.
S.~Kristensen and M.~L.~Laursen are supported by the Independent Research Fund Denmark (Grant ref. 1026-00081B) and the Aarhus University Research Research Foundation (Grant ref. AUFF-E-2021-9-20).

J. Han\v cl\\
Department of Mathematics, Faculty of Science, University of Ostrava, 30.~dubna~22, 701~03 Ostrava~1, Czech Republic.\\
e-mail: hancl@osu.cz

\smallskip

S. Kristensen\\
Department of Mathematics, Aarhus University, Ny Munkegade 118, 8000 Aarhus C, Denmark. \\
e-mail: sik@math.au.dk

\smallskip

M. L. Laursen\\
Department of Mathematics, Aarhus University, Ny Munkegade 118, 8000 Aarhus C, Denmark. \\
e-mail: mll@math.au.dk

\end{document}